\title{Non-local torsion functions and embeddings}
\author[Franzina]{Giovanni Franzina}
\address[G. Franzina]{Istituto Nazionale di Alta Matematica (IN{\rm d}AM)
	\newline\indent
	Unit\`a di Ricerca di Firenze c/o DiMaI ``Ulisse Dini'' Universit\`a di Firenze
	\newline\indent
	Viale Morgagni 67/A, I-50134 Firenze, Italy}
\email{franzina@math.unifi.it}
\numberwithin{equation}{section}
\newcommand{\R}{\mathbb R}
\def\case#1#2{\par\noindent{{ if~#1,}}{#2}\\}
\def\XXint#1#2#3{{\setbox0=\hbox{$#1{#2#3}{\int}$} \vcenter{\vspace{-1pt}\hbox{$#2#3$}}\kern-.5\wd0}}
\def\Xint#1{\mathchoice {\XXint\displaystyle\textstyle{#1}}{\XXint\textstyle\scriptstyle{#1}}{\XXint\scriptstyle\scriptscriptstyle{#1}}{\XXint\scriptscriptstyle\scriptscriptstyle{#1}}\!\int}
\def\intmed{\Xint{-}}
\newtheorem{lm}{Lemma}[section]
\newtheorem{prop}[lm]{Proposition}
\newtheorem{cor}[lm]{Corollary}
\newtheorem{teo}[lm]{Theorem}
\theoremstyle{definition}
\newtheorem{defi}[lm]{Definition}
\newtheorem{rmq}[lm]{Remark}
\newtheorem*{ack}{Acknowledgments}
\begin{document}
\maketitle
{\quad}
\begin{abstract}
\small{
Given $s\in(0,1)$, we discuss the embedding of $\mathcal{D}_0^{s,p}(\Omega)$
in $L^q(\Omega)$. In particular, for $1\le q<p$ we deduce its compactness
on all open sets $\Omega\subset \R^N$ on which it is continuous.
We then relate, for all $q$ up the fractional Sobolev conjugate exponent,
the continuity of the embedding to the summability of
the function solving
the fractional torsion problem in $\Omega$ 
in a suitable weak sense, for every open set $\Omega$.
The proofs make use of a non-local Hardy-type inequality in $\mathcal{D}_0^{s,p}(\Omega)$,
involving
the fractional torsion function
as a weight.

\vskip .3truecm \noindent Keywords: Sobolev embedding; Torsional rigidity; Hardy inequality;
Non-local Equations.
\vskip.1truecm \noindent 2010 Mathematics Subject Classification: 35P15, 46E35, 34K37}

\end{abstract}
\section{Introduction}
Let $\Omega$ be an open set in $\R^N$, let $1<p<\infty$, let $0<s<1$,
and let $\mathcal{D}_0^{s,p}(\Omega)$ be the homogeneous fractional Sobolev space
obtained by completion of $C^\infty_0(\Omega)$ with respect to the Gagliardo norm
\[
	\left(\iint_{\mathbb R^{2N}}\frac{|u(x)-u(y)|^p}{|x-y|^{N+sp}}\,dx\,dy\right)^\frac{1}{p}\,.
\]
Let also $1\le q<p^\ast_s$
(here $p^\ast_s=Np/(N-sp) $ if $sp<N$ and $p^\ast_s=\infty$ otherwise.)
The continuity of the embedding of $\mathcal{D}_0^{s,p}(\Omega)$ into $L^q(\Omega)$ is equivalent
to condition $\lambda_{p,q}^s(\Omega)>0$, where 
\begin{equation}
\label{1.5}
\lambda_{p,q}^s(\Omega) = \inf_{u\in C^\infty_0(\Omega)} \Big\{ \iint_{\mathbb R^{2N}} \frac{|u(x)-u(y)|^p}{|x-y|^{N+sp}}\,dx\,dy\colon
\int_\Omega|u|^q\,dx=1 \Big\}\,.
\end{equation}
One aim of this short note is to relate this condition to the {\em compactness} of the embedding.
To do so, following~\cite{BR} we combine variational techniques and comparison principles and
in Section~\ref{s:2} we give, for {\em every} open set $\Omega$, a suitable weak definition of the
unique solution $w_{s,p,\Omega}$ of the problem
\begin{equation}
\label{1.7}
\begin{cases}
	 (-\Delta_p)^{s}\,w=1\,, & \qquad \text{in $\Omega$,}\\ w=0\,, & \qquad \text{in $\mathbb R^n\setminus\Omega$.}
\end{cases}
\end{equation}
The {\em $(s,p)$-laplacian} $ (-\Delta_p)^{s}$ is the integro-differential
operator defined (up to renormalisations) by
\begin{equation}
\label{1.6}
(-\Delta_p)^su(x):=- 2\lim_{\varepsilon\to0^+}\int_{|x-y|>\varepsilon} \frac{|u(x)-u(y)|^{p-2}(u(x)-u(y))}{|x-y|^{N+sp}}\,dy\,,
\end{equation}
for all smooth functions $u$.
The function $w_{s,p,\Omega}$ is to be called the $(s,p)$-torsion function on $\Omega$,
since (formally) for $s=1$ the solution of \eqref{1.7} is
the $p$-torsion function on $\Omega$. 
\medskip

First, we have the following result.

\begin{teo}\label{main}
Let $\Omega\subset\R^N$ be open, $1<p<\infty$, $1\le q<\infty$, and $0<s<1$. Then the following holds:
\begin{itemize}
\item If $1\le q<p$, then
	\begin{equation}\label{subhom}
		\lambda_{p,q}^s(\Omega)>0 \Longleftrightarrow w_{s,p,\Omega} \in L^{\frac{p-1}{p-q}q}(\Omega)\,.
	\end{equation}
\item If $p\le q<p^\ast_s$ then
		\begin{equation}\label{superhom}
		\lambda_{p,q}^s(\Omega) >0\Longleftrightarrow w_{s,p,\Omega}\in L^\infty(\Omega)\,.
	\end{equation}
\end{itemize}
\end{teo}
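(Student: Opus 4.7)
The plan is to combine the non-local Hardy-type inequality (with $w_{s,p,\Omega}$ as weight) with elementary H\"older estimates for the $\Leftarrow$ directions and with a Moser-type iteration for the $\Rightarrow$ directions. Throughout I write $w=w_{s,p,\Omega}$ and denote by $[\,\cdot\,]_{s,p}$ the Gagliardo seminorm appearing in the definition of $\mathcal{D}_0^{s,p}(\Omega)$. I take for granted from the previous sections a Hardy-type inequality of the form
\begin{equation*}
\int_\Omega \frac{|u|^p}{w^{p-1}}\,dx\le C\,[u]_{s,p}^p,\qquad u\in\mathcal{D}_0^{s,p}(\Omega),
\end{equation*}
together with the energy identity $[w]_{s,p}^p=\int_\Omega w\,dx$ obtained by testing \eqref{1.7} with $w$ itself.

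\textbf{Sufficiency.} In the subhomogeneous case $1\le q<p$, I would split
$|u|^q=(|u|^p\,w^{1-p})^{q/p}\,w^{(p-1)q/p}$ and apply H\"older's inequality with conjugate exponents $p/q$ and $p/(p-q)$, getting
\[
\int_\Omega|u|^q\,dx\le\Bigl(\int_\Omega\tfrac{|u|^p}{w^{p-1}}\,dx\Bigr)^{q/p}\Bigl(\int_\Omega w^{(p-1)q/(p-q)}\,dx\Bigr)^{(p-q)/p}.
\]
The Hardy inequality and the hypothesis $w\in L^{(p-1)q/(p-q)}(\Omega)$ immediately give $\lambda_{p,q}^s(\Omega)>0$. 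In the superhomogeneous case $p\le q<p^\ast_s$, assuming $w\in L^\infty$, I would observe that $\int|u|^p\le \|w\|_\infty^{p-1}\int|u|^p/w^{p-1}$, so Hardy already yields the continuous embedding into $L^p$; an interpolation with the fractional Sobolev inequality $\mathcal{D}_0^{s,p}(\R^N)\hookrightarrow L^{p^\ast_s}$ (available in the relevant regime $sp<N$) covers the range up to $p^\ast_s$.

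\textbf{Necessity.} Here I plan to test the torsion equation with the power $\varphi=w^\beta$, exploiting the algebraic inequality
\[
|a-b|^{p-2}(a-b)(a^\beta-b^\beta)\ge c_{\beta,p}\,\bigl|a^{(\beta+p-1)/p}-b^{(\beta+p-1)/p}\bigr|^p,\qquad a,b\ge 0,
\]
to control the Gagliardo energy of $w^{\gamma}$ with $\gamma=(\beta+p-1)/p$ by $C_\beta\int w^\beta\,dx$. Combined with the standing assumption $\lambda_{p,q}^s(\Omega)>0$, this gives a self-improving estimate
$\|w\|_{\gamma q}^{\gamma p}\le C_\beta\,\|w\|_\beta^\beta/\lambda_{p,q}^s(\Omega)$.
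For $1\le q<p$ the recursion $\alpha_{n+1}=q(\alpha_n+p-1)/p$ is a contraction with ratio $q/p<1$ and unique fixed point $r=(p-1)q/(p-q)$; starting from $\alpha_0=q$, which is admissible because the embedding $\mathcal{D}_0^{s,p}(\Omega)\hookrightarrow L^q(\Omega)$ forces $w\in L^q(\Omega)$, a careful control of the multiplicative constants yields $w\in L^r$. For $p\le q<p^\ast_s$, the factor $q/p\ge 1$ makes the analogous iteration amplify the exponent geometrically, and a standard Moser scheme produces the $L^\infty$ bound.

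\textbf{Main obstacle.} The technical heart of the proof is the rigorous use of the power test function $w^\beta$ in the non-local formulation: this requires truncating $w$ at height $k$, invoking the algebraic inequality above to identify a Gagliardo-type energy on the left-hand side, and passing to the limit with the Hardy inequality as an a priori control. Once this is in place, the bookkeeping of the Moser chain (to keep the multiplicative constants $C_\beta$ summable or geometric along the iteration) follows the template of~\cite{BR} for the local $p$-Laplacian.
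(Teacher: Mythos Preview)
Your sufficiency arguments are correct and match the paper's approach (Hardy with $w$ as weight, plus H\"older for $q<p$ and Gagliardo--Nirenberg interpolation for $q>p$; the paper in fact leaves the subhomogeneous $\Leftarrow$ implicit, so your argument fills that in).

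For the subhomogeneous necessity the paper does not iterate. Working on the bounded exhaustion $\Omega\cap B_R$ (where $w_R\in L^\infty$ by Proposition~\ref{locale}, so powers of $w_R$ are admissible tests), it plugs in $\varphi=w_R^\beta$ directly with the \emph{fixed-point} exponent $\beta=\tfrac{(p-1)q}{p-q}$; since then $\tfrac{(\beta+p-1)q}{p}=\beta$, both sides of the resulting inequality carry $\|w_R\|_\beta$, one divides through to obtain a bound uniform in $R$, and Fatou finishes. This one-step closure also sidesteps an issue you skip: for a general open $\Omega$ the pointwise limit $w$ need not lie in $\mathcal{D}_0^{s,p}(\Omega)$, so ``the embedding forces $w\in L^q$'' is not a legitimate seed for your iteration unless you first pass to the approximants $w_R$.

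For the superhomogeneous necessity your Moser scheme has a genuine gap at $q=p$. There the recursion is arithmetic, $\beta_{n+1}=\beta_n+p-1$, and since $c_{\beta_n}^{-1}\sim\beta_n^{\,p-1}$ the accumulated constant grows like $(n!)^{p-1}$; tracking the $\beta_n$-th roots one finds $\|w\|_{\beta_n}\sim n\to\infty$, so no $L^\infty$ bound results. (For $q>p$ the growth is indeed geometric and the scheme can be closed, but only via an absorption trick you do not mention: bounding the seed norm by $\|w_R\|_\infty$, finite on the exhaustion, and using that the cumulative exponent on the seed is strictly less than $1$.) The paper takes an entirely different route: after reducing to bounded smooth $\Omega$ (where $\|w\|_\infty<\infty$), it tests the equation with $w\zeta^p$ for a cutoff $\zeta$ on $B_R$ to obtain a Caccioppoli-type bound on $[w\zeta]_{s,p}$, and pairs this with the local $L^\infty$ estimate of~\cite{BP} involving the non-local tail; choosing $R\sim\|w\|_\infty^{(p-1)/(sp)}$ in \eqref{lambdae} then yields $\|w\|_\infty\le C\,\lambda_{p,q}^s(\Omega)^{1/(1-q)}$.
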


We also present a consequence of Theorem~\ref{main}, concerning super-homogeneous embeddings.
We refer to~\cite{Mb} for a different proof in Sobolev spaces for $s=1$.
\begin{cor}\label{mainscemo}
Let $1<p<\infty$, let $0<s<1$, and let $\Omega$ be an open set. Then
 $\mathcal{D}_0^{s,p}(\Omega)\hookrightarrow L^p(\Omega)$
if and only if
$\mathcal{D}_0^{s,p}(\Omega)\hookrightarrow L^q(\Omega)$ for some  (hence for all) $q$ with $p<q<p^\ast_s$.
\end{cor}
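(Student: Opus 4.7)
The plan is to derive Corollary~\ref{mainscemo} directly from Theorem~\ref{main}, exploiting the fact that in the super-homogeneous regime $p\le q<p_s^\ast$ the right-hand side of the equivalence \eqref{superhom} is \emph{the same condition} $w_{s,p,\Omega}\in L^\infty(\Omega)$ for every admissible exponent~$q$. So the corollary should come essentially for free once the theorem is available.

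More concretely, I would first recall from the opening paragraph of the introduction that continuity of the embedding $\mathcal{D}_0^{s,p}(\Omega)\hookrightarrow L^q(\Omega)$ is equivalent to $\lambda^s_{p,q}(\Omega)>0$, for every $q\in[1,p_s^\ast)$. Next, since the value $q=p$ is the left endpoint of the range in the second bullet of Theorem~\ref{main}, applying \eqref{superhom} with $q=p$ yields
\[
\mathcal{D}_0^{s,p}(\Omega)\hookrightarrow L^p(\Omega)\quad\Longleftrightarrow\quad w_{s,p,\Omega}\in L^\infty(\Omega).
\]
Applying \eqref{superhom} again for an arbitrary $q\in(p,p_s^\ast)$ gives the same equivalence with $L^p$ replaced by $L^q$. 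Chaining these two equivalences through the common middle condition produces both implications in the corollary at once, and the "hence for all $q$" clause is automatic because the middle condition does not involve~$q$.

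The main conceptual obstacle is, of course, already absorbed into Theorem~\ref{main}: it is there that one must show that boundedness of the single weight $w_{s,p,\Omega}$ controls every super-homogeneous embedding constant (and, conversely, is forced by any of them). Once that is granted, the corollary is a one-line consequence, and no separate treatment of the endpoint $q=p$ is needed because it is already included in the second bullet of the theorem. I would therefore present the proof as a short deduction rather than a standalone argument, making explicit only the two applications of \eqref{superhom} and the transitive identification through $w_{s,p,\Omega}\in L^\infty(\Omega)$.
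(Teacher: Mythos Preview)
Your proposal is correct and matches the paper's treatment: the paper presents Corollary~\ref{mainscemo} explicitly as ``a consequence of Theorem~\ref{main}'' and gives no separate proof, so the deduction you outline---applying \eqref{superhom} at $q=p$ and at an arbitrary $q\in(p,p_s^\ast)$ and chaining through the $q$-independent condition $w_{s,p,\Omega}\in L^\infty(\Omega)$---is exactly what is intended.
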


Next, we provide a criterion for the compactness of sub-homogeneous Sobolev embeddings. 

\begin{teo}\label{main2}
Let  $1\le q<p$, let $0<s<1$, and let $\Omega$ be an open set in $\R^N$.
Then the compactness of the embedding
$\mathcal{D}_0^{s,p}(\Omega)\hookrightarrow L^q(\Omega)$ is equivalent both to to the finiteness of
$\|w_{s,p,\Omega}\|_{\frac{p-1}{p-q}q}$
and to the positivity of
$\lambda_{p,q}^s(\Omega)$.
\end{teo}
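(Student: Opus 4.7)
The plan is to reduce the statement to Theorem~\ref{main}: indeed, \eqref{subhom} already provides the equivalence between positivity of $\lambda_{p,q}^s(\Omega)$ and finiteness of $\|w_{s,p,\Omega}\|_{(p-1)q/(p-q)}$, while compactness trivially implies continuity, hence positivity of $\lambda_{p,q}^s(\Omega)$. The only new implication left to establish is
\[
	w_{s,p,\Omega}\in L^{(p-1)q/(p-q)}(\Omega)\ \Longrightarrow\ \mathcal{D}_0^{s,p}(\Omega)\hookrightarrow L^q(\Omega)\ \text{compactly.}
\]

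The engine will be the non-local Hardy-type inequality with torsion weight announced in the abstract, which I anticipate takes the schematic form
\[
	\int_\Omega \frac{|u|^p}{w_{s,p,\Omega}^{p-1}}\,dx \le C\iint_{\R^{2N}} \frac{|u(x)-u(y)|^p}{|x-y|^{N+sp}}\,dx\,dy
\]
for every $u\in\mathcal{D}_0^{s,p}(\Omega)$, with a universal constant $C$. Applying H\"older's inequality with the conjugate pair $(p/q,\,p/(p-q))$ converts this, for every measurable $E\subset\Omega$, into the localised bound
\[
	\int_E|u|^q\,dx \le \Bigl(\int_E\frac{|u|^p}{w_{s,p,\Omega}^{p-1}}\,dx\Bigr)^{q/p}\Bigl(\int_E w_{s,p,\Omega}^{(p-1)q/(p-q)}\,dx\Bigr)^{(p-q)/p},
\]
which will deliver tightness both at infinity and inside $\Omega$.

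Given this, the compactness argument runs as follows. Let $\{u_n\}\subset\mathcal{D}_0^{s,p}(\Omega)$ be bounded; extract a weakly convergent subsequence with weak limit $u_\infty$ and, replacing $u_n$ by $u_n-u_\infty$, reduce to $u_\infty=0$. Fix $\eps>0$. Absolute continuity of the integral and the hypothesis $w_{s,p,\Omega}\in L^{(p-1)q/(p-q)}(\Omega)$ furnish a large ball $B_R$ centred at the origin with $\|w_{s,p,\Omega}\|_{L^{(p-1)q/(p-q)}(\Omega\setminus B_R)}<\eps$; combining the Hardy inequality with the uniform $\mathcal{D}_0^{s,p}$-bound on $\{u_n\}$ yields $\int_{\Omega\setminus B_R}|u_n|^q\,dx\le C'\eps^{q(p-1)/p}$ uniformly in $n$. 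On the bounded slice $\Omega\cap B_R$, the fractional Rellich--Kondrachov theorem gives compact embedding of $\mathcal{D}_0^{s,p}(\Omega)$ into $L^q(\Omega\cap B_R)$ (recall $q<p<p_s^\ast$), so $u_n\to 0$ in $L^q(\Omega\cap B_R)$ along a subsequence. Summing the two pieces and letting $\eps\to 0$ concludes.

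The step I expect to be the main obstacle is the rigorous derivation of the weighted Hardy inequality itself: its right-hand side is the full Gagliardo seminorm, while the weight $w_{s,p,\Omega}^{p-1}$ is only implicitly controlled through the weak formulation of~\eqref{1.7}, and standard test-function manipulations are delicate since $w_{s,p,\Omega}$ need not be smooth, bounded, nor bounded away from zero. My plan is to approximate $|u|^p/w_{s,p,\Omega}^{p-1}$ by truncations making it an admissible test function in the weak formulation of the torsion equation, then exploit convexity-type inequalities for the non-local operator $(-\Delta_p)^s$ applied to ratios of powers and pass to the limit, in the spirit of~\cite{BR}.
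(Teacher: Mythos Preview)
Your proposal is correct and follows essentially the same route as the paper: reduction via Theorem~\ref{main}, the torsional Hardy inequality \eqref{coroineq} combined with H\"older (same conjugate pair $p/q$, $p/(p-q)$) to control the tail $\int_{\Omega\setminus B_R}|v_n|^q\,dx$, and Rellich--Kondra\v{s}ov on the bounded slice $\Omega\cap B_R$. Your anticipated derivation of the Hardy inequality---testing the torsion equation with $|u|^p/(w+\varepsilon)^{p-1}$ and using a Picone-type convexity inequality---is exactly what the paper does in Section~\ref{s:4}.
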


The proofs of these results are presented in Section~\ref{s:5} and they rely a new Hardy-type inequality, involving
the $(s,p)$-torsion function, proved in Section~\ref{s:4}.

\section{Preliminaries}\label{s:2}

Throughout this note, 
for every open set $\Omega$ in the Euclidean $N$-space $\mathbb R^N$
we will denote by $C^\infty_0(\Omega)$ the set of all $C^\infty$ smooth functions with compact support in $\Omega$.
Given $s\in(0,1)$ and $p\in(1,\infty)$, we define
$\mathcal{D}_0^{s,p}(\Omega)$ as the completion of $C^\infty_0(\Omega)$ with respect to the norm 
\begin{equation}\label{1.1}
	[u]_{s,p}=\left\{\iint_{\mathbb R^{2N}} \frac{|u(x)-u(y)|^p}{|x-y|^{N+sp}}\,dx\,dy\right\}^\frac{1}{p}\,,
	\qquad  u\in C^\infty_0(\Omega)\,.
\end{equation} 

A list of properties of $\mathcal{D}_0^{s,p}(\Omega)$ is given e.g. in~\cite{BLP}, see in particular Section 2 and Appendix B therein.
We summarise here a couple of facts we shall need in the sequel.

If $\Omega$ is bounded in one direction, in view of~\cite[Lemma 5.2]{BC}
we get $\mathcal{D}_0^{s,p}(\Omega)$ by completion also starting from the norm
\begin{equation}\label{1.2}
\|u\|_{L^p(\Omega)}+ [u]_{s,p}\,.
\end{equation}
Instead, for a general open set the two procedures are not equivalent and
adding the $L^p$ norm results in a smaller space unless
$\Omega$ supports a fractional Poincar\'e inequality, i.e., if there exists $\lambda>0$ with
\begin{equation}
\label{1.4}
	\lambda\int_\Omega|u|^p\,dx \le  \iint_{\mathbb R^{2N}} \frac{|u(x)-u(y)|^p}{|x-y|^{N+sp}}\,dx\,dy\,,
	\qquad \text{for all $u\in C^\infty_0(\Omega)$.}
\end{equation}
In fact, in general $\mathcal{D}_0^{s,p}(\Omega)$ is not a space of distributions, either
(for some examples, we refer the interested reader, e.g., to~\cite{DL,HL})

Incidentally, if in addition $sp\neq1$ and $\Omega$ has a Lipschitz regular boundary then
$\mathcal{D}_0^{s,p}(\Omega)$ coincides with the subspace $W^{s,p}_0(\Omega)$ of the Sobolev-Slobodecki\u{\i}
space $W^{s,p}(\Omega)$, given by the closure in  $W^{s,p}(\Omega)$ of $C^\infty_0(\Omega)$ with respect to a norm different from \eqref{1.2}, more precisely the following one:
\begin{equation}
\label{1.3}
\Big(\int_\Omega|u|^p\,dx\Big)^\frac{1}{p}+\Big(\int_{\Omega}\int_{\Omega} \frac{|u(x)-u(y)|^p}{|x-y|^{N+sp}}\,dx\,dy\Big)^\frac{1}{p}\,.
\end{equation}
On the contrary, the existence of functions $u\in W^{s,p}_0(\Omega)$ for which the integral
\begin{equation}
\int_\Omega\int_{\mathbb R^N\setminus \Omega} \frac{|u(x)|^p}{|x-y|^{N+sp}}\,dx\,dy
\end{equation}
is infinite cannot be ruled out except if the boundary of $\Omega$ is smooth, hence in general
 $\mathcal{D}_0^{s,p}(\Omega)$ is a narrower space than $W^{s,p}_0(\Omega)$, even if $\Omega$ is bounded.

We set
\[
	p_s^\ast = 
	\begin{cases}
	\frac{Np}{N-sp}\,, & \text{if $sp<N$,}\\ 
	\infty\,, & \text{if $sp\ge N$.}
	\end{cases}
\]
In cases when $sp{<}N$, 
$\mathcal{D}_0^{s,p}(\Omega)$ is indeed a function space, thanks to the embedding
of $\mathcal{D}_0^{s,p}(\Omega)$ into $L^{p^\ast_s}(\Omega)$.
In these cases, 
the best constant in the Sobolev embedding, i.e.,
\begin{equation*}
\inf \Big\{ [u]_{s,p}^p \colon \|u\|_{L^{p^\ast_s}(\Omega)}=1\Big\}\,,
\end{equation*}
is independent of $\Omega$ and here will be denoted by $\mathcal{S}(N,s,p)$.
We refer, e.g., to~\cite{BMS,MS} for a more detailed account about
this constant and the extremals, viz. the functions $u$ for which inequality
\begin{equation}
\label{fracsob}
\mathcal{S}(N,s,p)\left(\int_{\R^N}|u|^{\frac{Np}{N-sp}}\,dx\right)^\frac{N-sp}{N}\le\iint_{\R^{2N}} \frac{|u(x)-u(y)|^p}{|x-y|^{N+sp}}\,dx\,dy
\end{equation}
holds as an equality.

The following Lemma contains a well known fact about functions in the Campanato
space
 $\mathcal L^{p,\lambda}$ with $\lambda>N$. We include a proof for the convenience of the reader.
Given $u \in C^\infty_0(\R^N)$, we denote by $u_{x,r}=\intmed_{B(x,r)} u\,dy$ the average
of $u$ on the ball $B(x,r)$ of radius $r$ about $x\in\mathbb R^N$. 

\begin{lm}\label{lmCA}
Let $C>0$ and let $u \in C^\infty_0(\R^N) $ with
\begin{equation}
\label{Ca1}
 \intmed_{B(x,r)} |u-u_{x,r}|^p\,dy \le C r^{sp-N}\,,\quad\quad \text{for all $x\in\R^N$ and for all $r>0$.}
\end{equation}
Then
\begin{equation}
\label{Ca2}
	|u(x)-u_{x,r}|\le c(N,s,p)\cdot C r^{s-\frac{N}{p}}\,,\quad
	\text{for all $x\in\mathbb R^N$ and for all $r>0$.}
\end{equation} 
\end{lm}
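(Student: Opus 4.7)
Proof proposal. The plan is to run the classical Campanato--Meyers dyadic telescoping argument, which deduces pointwise Hölder-type control from an $L^p$ mean-oscillation bound. Fix $x \in \R^N$ and $r > 0$, set $r_k := 2^{-k} r$ for $k \ge 0$, and write $u_k := u_{x, r_k}$. The key step is to estimate the consecutive increments $|u_k - u_{k+1}|$. Since $u_k - u_{k+1}$ is a constant, it equals its own $p$-th mean on $B(x, r_{k+1})$; inserting $\pm u(y)$ and using $(a+b)^p \le 2^{p-1}(a^p + b^p)$ yields
\begin{equation*}
|u_k - u_{k+1}|^p \le 2^{p-1}\Bigl(\intmed_{B(x, r_{k+1})}|u - u_k|^p\,dy + \intmed_{B(x, r_{k+1})}|u - u_{k+1}|^p\,dy\Bigr).
\end{equation*}
The second term is bounded directly by \eqref{Ca1}. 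For the first, enlarging the integration domain from $B(x, r_{k+1})$ to $B(x, r_k)$ costs only the volume ratio $|B(x, r_k)|/|B(x, r_{k+1})| = 2^N$, after which \eqref{Ca1} applies again. Using $r_k = 2 r_{k+1}$ to equalise the radii, this leads to
\begin{equation*}
|u_{k+1} - u_k| \le c(N, s, p)\, C^{1/p}\, r_{k+1}^{s - N/p}.
\end{equation*}

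The remainder is a telescoping argument. Summing the geometric series
\begin{equation*}
|u_0 - u_k| \le \sum_{j=0}^{k-1} |u_j - u_{j+1}| \le c(N, s, p)\, C^{1/p}\, r^{s - N/p} \sum_{j=0}^{k-1} 2^{-(j+1)(s - N/p)},
\end{equation*}
which converges in the regime $sp > N$ relevant to the sequel, and then passing to the limit $k \to \infty$, using the continuity of $u \in C^\infty_0(\R^N)$ to get $u_k \to u(x)$, one obtains \eqref{Ca2}.

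The argument is essentially bookkeeping of constants and presents no real obstacle; the only subtlety worth flagging is the homogeneity, which forces $C^{1/p}$ (rather than $C$) to appear on the right-hand side once one extracts a $p$-th root from \eqref{Ca1}, so the constant $c(N, s, p)$ in \eqref{Ca2} is to be read as multiplying $C^{1/p}$.
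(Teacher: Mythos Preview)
Your argument is essentially the same as the paper's: both run the standard Campanato dyadic telescoping, bounding $|u_{x,2^{-j}r}-u_{x,2^{-(j+1)}r}|$ by integrating the pointwise inequality $2^{1-p}|u_{j}-u_{j+1}|^p\le |u_j-u(y)|^p+|u(y)-u_{j+1}|^p$ over the smaller ball, invoking \eqref{Ca1}, summing the resulting geometric series, and letting the radii shrink to the point. Your remark on homogeneity is well taken: the paper's own computation produces $C^{1/p}$ after taking the $p$-th root (the appearance of $C^p$ in the displayed line just before \eqref{estimCR} and of $C$ in \eqref{Ca2} are slips), so your reading of the constant is the correct one.
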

\begin{proof}
It is enough to show that
\begin{equation}
\label{estimC}
|u_{x,2^{-k}r}-u_{x,2^{-(k+h)}r}| \le  
c(N,s,p)\cdot C\, \frac{1-2^{-h(\frac{sp-N}{p})}}{2^{k\left(s-\frac{N}{p}\right)}} r^{s-\frac{N}{p}}\,,
\end{equation}
for all $x\in \R^N$, for all $r>0$, and for all $k,h\in\mathbb N$. Indeed, \eqref{estimC} implies that $(u_{x,2^{-h}r})_{h\in\mathbb N}$
is a Cauchy sequence. Then, taking $k=0$ and passing to the limit as $h\to\infty$ 
in \eqref{estimC} we obtain \eqref{Ca2}.

To prove \eqref{estimC}, we fix $k $ and we denote by $u_h $
the average of $u$ on the ball of radius $2^{-(h+k)}r$ centred at $x$. 
Because of triangle inequality, \eqref{estimC} holds if for every $h$ we have
\begin{equation}
\label{estimCR}
	|u_{j-1}-u_j|\le \omega_N^{s-\frac{1}{p}}2^{1+\frac{N}{p}}   2^{-(j-1)\left(s-\frac{N}{p}\right)}\cdot C R^{s-\frac{N}{p}}
	\,,\qquad \text{for all $j=1,\ldots,h$,}
\end{equation}
with $R=2^{-k}r$. To see that \eqref{estimCR} holds, we observe that
for every $y\in B(x,2^{-j}R)$ we have
\[
	2^{1-p} |u_{j-1}-u_j|^p\le |u_{j-1}-u(y)|^p+|u(y)-u_j|^p\,.
\]
Then an integration over $B(x,2^{-j}R)$, together with straightforward estimates, by \eqref{Ca1} gives
\[
	|u_{j-1}-u_1|^p\le \omega_N^{sp-1} 2^{(1-s)p}  (2^{-j})^{sp-N} \cdot C^p R^{sp-N}\,,
\]	
and taking the $p$-th root we obtain  \eqref{estimCR}.
\end{proof}
\begin{rmq}
The fact that $u\in C^{0,\alpha}(\R^N)$, with $\alpha=s-\frac{N}{p}$,
can be deduced with ease from \eqref{Ca2}.
\end{rmq}

The following Gagliardo-Nirenberg interpolation inequalities will be used a number of times in the rest of the paper.
For every $\gamma>1$ and for every function $u$, we abbreviate $\|u\|_{L^\gamma(\R^N)}$ to $\|u\|_\gamma$.
\begin{lm}\label{lm:GN}
Let $1\le q\le p<\infty$ and let $0<s<1$. Then the following holds:
\begin{itemize}
\item
\case{$sp\neq N$}{
	for every $r>0$ with $q<r\le p^\ast_s$ and for every $u\in C^\infty_0(\R^N)$ we have
	\begin{equation}
	\label{GN1}
		\left(\int_{\R^N}|u|^r\,dx\right)^{\frac{1}{r}}\le
		C_1\left( \int_{\R^N} |u|^q\,dx\right)^\frac{1-\vartheta}{q}
		\left( 
		\iint_{\R^{2N}} \frac{|u(x)-u(y)|^p}{|x-y|^{N+sp}}\,dx\,dy
		\right)^{\frac{\vartheta}{p}}
	\end{equation}
	with $\vartheta = \left(1-\frac{q}{r}\right)\left(1+\frac{sp-N}{Np}q\right)^{-1}$,
	for a suitable $C_1=C_1(N,p,q,r,s)>0$;
}
\item \case{$sp=N$}{
	for every $r\ge N/s$
	 and for every $u\in C^\infty_0(\R^N)$ we have
	\begin{equation}
	\label{GN2}
		\left(\int_{\R^N}|u|^r\,dx\right)^{\frac{1}{r}}
		\le C_2\left(\int_{\R^N} |u|^q\,dx\right)^{\frac{1}{r}}
		\!\!\left(
		\iint_{\R^{2N}} \frac{|u(x)-u(y)|^\frac{N}{s}}{|x-y|^{2N}}\,dx\,dy
		\right)^{\frac{s}{N}\left(1-\frac{q}{r}\right)}\,
	\end{equation}
	for a suitable $C_2= C_2(N,r,s) >0$.
}
\end{itemize}
\end{lm}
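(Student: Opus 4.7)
The plan is to reduce both \eqref{GN1} and \eqref{GN2} to a uniform two-step recipe: first establish an endpoint embedding of $\mathcal{D}_0^{s,p}(\R^N)$, then interpolate between this endpoint and $L^q$ by log-convexity of $L^\rho$-norms. The nature of the endpoint depends crucially on the sign of $sp-N$, and the absence of a global $L^\rho$-endpoint in the critical case $sp=N$ is the main obstacle.

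When $sp<N$, the endpoint $L^{p_s^\ast}$ is supplied by the fractional Sobolev inequality \eqref{fracsob}: $\|u\|_{p_s^\ast}\le \mathcal{S}(N,s,p)^{-1/p}[u]_{s,p}$. Inserting this into the classical interpolation $\|u\|_r\le \|u\|_q^{1-\theta}\|u\|_{p_s^\ast}^\theta$, with $1/r=(1-\theta)/q+\theta/p_s^\ast$, and checking the algebraic identity $\theta=\vartheta$ yields \eqref{GN1} directly.

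When $sp>N$, the endpoint becomes $L^\infty$, reached via Lemma~\ref{lmCA}. For $u\in C^\infty_0(\R^N)$, Jensen's inequality together with the trivial bound $|y-z|\le 2\rho$ on $B(x,\rho)\times B(x,\rho)$ gives $\intmed_{B(x,\rho)}|u-u_{x,\rho}|^p\,dy\le c_N\rho^{\,sp-N}[u]_{s,p}^p$, so Lemma~\ref{lmCA} applies with constant proportional to $[u]_{s,p}^p$ and returns the Morrey-type estimate $|u(x)-u_{x,\rho}|\le c\,[u]_{s,p}\,\rho^{s-N/p}$. Combined with the trivial $|u_{x,\rho}|\le c_N\rho^{-N/q}\|u\|_q$ and the triangle inequality $|u(x)|\le|u(x)-u_{x,\rho}|+|u_{x,\rho}|$, minimisation in $\rho$ produces $\|u\|_\infty\le C\,\|u\|_q^{1-\beta}[u]_{s,p}^{\beta}$ with $\beta=Np/\bigl(Np+q(sp-N)\bigr)$. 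A final log-convexity step, $\|u\|_r\le \|u\|_q^{q/r}\|u\|_\infty^{1-q/r}$, yields \eqref{GN1} with $\vartheta=\beta(1-q/r)$, as required.

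The critical case $sp=N$ is the genuinely delicate one, since no global Sobolev-type embedding into any $L^\rho(\R^N)$ with $\rho<\infty$ can hold (the scaling $u\mapsto u(\lambda\,\cdot)$ leaves $[\,\cdot\,]_{s,N/s}$ invariant while dilating every $L^\rho$-norm). My plan is to use $BMO(\R^N)$ as a surrogate endpoint: specialising the estimate above to $sp-N=0$ gives $\intmed_{B(x,\rho)}|u-u_{x,\rho}|^p\,dy\le c_N[u]_{s,N/s}^{N/s}$ \emph{uniformly} in $\rho$, which is precisely $\|u\|_{BMO(\R^N)}\le c\,[u]_{s,N/s}$. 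The target inequality \eqref{GN2} then follows from the standard interpolation $\|u\|_r\le C\|u\|_q^{q/r}\|u\|_{BMO}^{1-q/r}$ for $r\in[q,\infty)$, which for $q>1$ is obtained from the Fefferman--Stein bound $\|u\|_r\le c\|M^{\#}u\|_r$, log-convexity applied to $M^{\#}u$, and the $L^q$-continuity of the Hardy--Littlewood maximal operator; the endpoint case $q=1$ is dispatched by a routine real interpolation argument for the couple $(L^1,BMO)$.
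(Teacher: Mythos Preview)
Your treatment of the cases $sp<N$ and $sp>N$ matches the paper's almost line for line: Sobolev plus log-convexity in the subcritical case, and Lemma~\ref{lmCA} combined with $|u_{x,\rho}|\le c\rho^{-N/q}\|u\|_q$ in the supercritical case (the paper reaches the multiplicative $L^\infty$ bound by scaling rather than by optimising in $\rho$, which is equivalent).

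The critical case $sp=N$ is where the two arguments genuinely diverge. The paper does \emph{not} pass through $BMO$; instead it drops the differentiability index from $s$ to $\sigma=\tfrac{3s}{4}$ so that $\sigma p<N$, applies the already-established subcritical case \eqref{GN1} with $(s,q)$ replaced by $(\sigma,p)$, and then controls the lower-order seminorm $[\,\cdot\,]_{\sigma,p}$ by $\|\cdot\|_p$ and $[\,\cdot\,]_{s,p}$ via an elementary splitting of the double integral into $\{|x-y|<1\}$ and $\{|x-y|\ge1\}$ followed by a scaling argument; a final Lebesgue interpolation closes the loop. Your route, recognising the Campanato estimate at $sp=N$ as a $BMO$ bound and then invoking the multiplicative interpolation $\|u\|_r\lesssim\|u\|_q^{q/r}\|u\|_{BMO}^{1-q/r}$, is correct and arguably more transparent conceptually, but it imports nontrivial external machinery (Fefferman--Stein for $q>1$ and real interpolation of the couple $(L^1,BMO)$ for $q=1$). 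The paper's approach trades that for a fully self-contained argument that recycles the subcritical case and uses nothing beyond elementary estimates; the price is a slightly more intricate chain of inequalities.
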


\begin{rmq}
Since $q\le p$, an inequality of the form
\[
\left( \int_{\R^N}|u|^r\,dx\right)^\frac{1}{r} \lesssim \left(\int_{\R^N}|u|^q\right)^\frac{\alpha}{q} 
\left(
\iint_{\R^{2N}} \frac{|u(x)-u(y)|^p}{|x-y|^{N+sp}}\,dx\,dy
\right)^\frac{\beta}{p}\,,\qquad \text{for all $u\in C^\infty_0(\R^N)$,}
\]
can hold for a unique (ordered) pair $(\alpha,\beta)$ of exponents. This fact is easily checked
by the invariance of the inequality under vertical and horizontal scalings.
\end{rmq}

\begin{proof}[Proof of Lemma~\ref{lm:GN}]
In the case $sp<N$, \eqref{GN1} is a direct consequence of the fractional Sobolev inequality
\eqref{fracsob}
combined with the standard interpolation inequality
\begin{equation*}
	\|u\|_{r}\le
	\|u\|_{q}^{1-\vartheta}\|u\|_{p^\ast_s}^{\vartheta}
\end{equation*}
with $\vartheta ={ \left(1-\frac{q}{r}\right)}{\left(1-\frac{sp-N}{Np}q\right)^{-1}}$, and 
in this case
\begin{equation}
\label{SobGN}
	C_1
	=\left[ \mathcal{S}(N,s,p)\right]^{-\frac{\vartheta}{p}}\,.
\end{equation}

To prove \eqref{GN1} in the case $sp>N$, we fix $u\in C^\infty_0(\R^N)$, $ x\in\R^N$, $r>0$, and we observe that 
\begin{equation*}
	\int_{B(x,r) }|u-u_{x,r}|^p\,dy \le \frac{1}{\omega_N r^N} \int_{B(x,r)}\int_{B(x,r)} |u(y)-u(z)|^p\,dy\,dz\,,
\end{equation*}
by Jensen inequality.
Since $|y-z|<2r$ for all $y,z\in B(x,r)$, we deduce 
\begin{equation*}
	\int_{B(x,r) }|u-u_{x,r}|^p\,dy \le \frac{2^{N+sp}}{\omega_N} r^{sp} \iint_{\R^{2N}} \frac{|u(y)-u(z)|^p}{
	|z-y|^{N+sp}}\,dy\,dz\,.
\end{equation*}
By Lemma~\ref{lmCA}, this implies that 
\begin{equation*}
|u(x)-u_{x,r}|\le c \left( \iint_{\R^{2N}} \frac{|u(y)-u(z)|^p}{
	|z-y|^{N+sp}}\,dy\,dz\right)^\frac{1}{p} r^{s-\frac{N}{p}}\,,
\end{equation*}
for a suitable constant $c=c(N,s,p) >0$.
By H\"older inequality we have
\begin{equation*}
|u_{x,r}| \le \left( \intmed_{B(x,r)} |u(y)|^q\,dy\right)^\frac{1}{q} \,.
\end{equation*}
The last two inequalities hold for all $x\in\R^N$ and for all $r>0$, in particular with $r=1$. Therefore 
\begin{equation*}
\|u\|_{L^\infty(\R^N)} \le \left( \int_{\R^N} |u|^q\,dx\right)^\frac{1}{q} + c(N,s,p)\left( \iint_{\R^{2N}}\frac{|u(x)-u(y)|^p}{|x-y|^{N+sp}}\,dx\,dy\right)^{\frac{1}{p}}\,.
\end{equation*}
By a standard homogeneity argument, based on the invariance under horizontal scalings, the latter can be rephrased in the
following multiplicative form
\begin{equation*}
	\|u\|_{L^\infty(\R^N)} \le C(N,s,p) \left(\int_{\R^N}|u|^q\,dx\right)^{\frac{sp-N}{Np+(sp-N)q}}
	\left( \iint_{\R^{2N}} \frac{|u(x)-u(y)|^p}{|x-y|^{N+sp}}\,dx\,dy\right)^\frac{N}{Np+(sp-N)q}\,.
\end{equation*}
Then \eqref{GN1} follows by the obvious estimate
\(
	\|u\|_r\le \|u\|_{\infty}^{1-\frac{q}{r}} \|u\|_q^\frac{q}{r}\,.
\)

Eventually, to end the proof we assume that $1\le q \le p=N/s \le r$ and we prove \eqref{GN2}. 
Let $\sigma = \frac{3s}{4}$
and set $\theta {=} (1-\frac{p}{r}) \frac{N}{\sigma p}$.
Since $\sigma p<N$, applying \eqref{GN1} with $q=p$ and $s $ replaced by $\sigma$ we get
\begin{equation}\label{gnpN1}
\left(\int_{\R^N}|u|^r\right)^{\frac{1}{r}}\le
C(N,r,s) \left(\int_{\R^N}|u|^{\frac{N}{s}}\right)^{\frac{s(1-\theta)}{N}}
\left( \iint_{\R^{2N}}\frac{|u(x)-u(y)|^{\frac{N}{s}}}{|x-y|^{N(1+\frac{\sigma}{s})}}\right)^\frac{s\theta}{N}
\end{equation}
for all $u\in C^\infty_0(\R^N)$. 
We observe that the inequality
\begin{equation}\label{gnpN2}
\left( \iint_{\R^{2N}}\frac{|u(x)-u(y)|^{\frac{N}{s}}}{|x-y|^{N(1+\frac{\sigma}{s})}}\right)^\frac{s\theta}{N}
\le C(s)  \left(\int_{\R^N}|u|^\frac{N}{s}\right)^{\frac{s-\sigma}{\sigma \frac{N}{s}}}
\left( \iint_{\R^{2N}}\frac{|u(x)-u(y)|^{\frac{N}{s}}}{|x-y|^{2N}}\right)^\frac{\sigma}{N}
\end{equation}
holds for all $u\in C^\infty_0(\R^N)$, too. Indeed, since $\sigma<s$ we have
\[
	\int_{\R^N} \int_{|y-x|<1} \frac{|u(x)-u(y)|^\frac{N}{s}}{|x-y|^{N(1+\frac{\sigma}{s})}}\,dx\,dy \le
	\int_{\R^N} \int_{|y-x|<1} \frac{|u(x)-u(y)|^\frac{N}{s}}{|x-y|^{2N}}\,dx\,dy \,.
\]
In addition, we also have that
\[
	\int_{\R^N}\! \int_{|x-y|\ge1} \frac{|u(x)-u(y)|^\frac{N}{s}}{|x-y|^{N(1+\frac{\sigma}{s})}}\,dx\,dy \le
	2^{\frac{N}{s}}\!\int_{\R^N} |u(x)|^\frac{N}{s}
\!	\int_{|y-x|\ge1}\frac{dy}{|x-y|^{N(1+\frac{\sigma}{s})}}
	dx
	\le \frac{2^{\frac{N}{s}+1}}{N}\! \int_{\R^N} |u|^\frac{N}{s}\,dx\,,
\]
where in the last passage we used that $\sigma>s/2$. Then, \eqref{gnpN2} follows
by a direct homogeneity argument.
Combining \eqref{gnpN1} and \eqref{gnpN2} with standard interpolation in Lebesgue spaces we obtain
\begin{equation}\label{GN1.9}
\|u\|_{L^r(\R^N)} \le C_2(N,r,s) \|u\|_{L^q(\R^N)}^{(1-\lambda)(1-\frac{\theta \sigma}{s})} \|u\|_{L^r(\R^N)}^{\lambda(1-\frac{\theta\sigma}{s})} \left( \iint_{\R^{2N}}\frac{|u(x)-u(y)|^\frac{N}{s}}{|x-y|^{2N}}\,dx\,dy \right)^{\frac{\theta\sigma}{s}}
\end{equation}
for all $u\in C^\infty_0(\R^N)$, with $\lambda\in(0,1)$ being such that $\frac{s}{N} = \frac{1-\lambda}{q} + \frac{\lambda}{r}$.
We observe that by definition we have
\(
	\frac{\theta\sigma}{s} = 1-\frac{N}{rs}
\). 
Then \eqref{GN2} follows
dividing out a term in \eqref{GN1.9}.
\end{proof}

\begin{rmq}
The proof above works with no difference if $\sigma=\frac{3s}{4}$ is replaced by any other $\sigma\in(\frac{s}{2},s)$.
In this case, the constant appearing
in \eqref{GN2} will change, going to depend on the choice of $\sigma$
through the one appearing in \eqref{gnpN1}. Note that in view of \eqref{SobGN} the latter blows up as $\sigma\to s^{-}$.
\end{rmq}

\section{The Fractional Torsion Function}\label{s:2}

\subsection{Compact case} Throughout the present subsection,
we shall assume that the embedding of $\mathcal{D}_0^{s,p}(\Omega) $ into $L^1(\Omega)$ is compact, and we  list some properties of the fractional torsion function under this assumption.

\begin{defi}
Let $\Omega$ be such that  the embedding of $\mathcal{D}_0^{s,p}(\Omega) $ into $L^1(\Omega)$ is compact.
Then we call the $(s,p)$-torsion function on $\Omega$, denoted by $w_{s,p,\Omega}$, the unique 
solution of the minimum problem
\begin{equation}
\label{3.1}
\min_{u\in \mathcal{D}_0^{s,p}(\Omega)}\left\{\frac{1}{p}\iint_{\mathbb R^{2N}} \frac{|u(x)-u(y)|^p}{|x-y|^{N+sp}}\,dx\,dy-\int_\Omega u\,dx\right\}\,.\qquad
\end{equation}
\end{defi}
By a standard homogeneity argument, the minimum value in \eqref{3.1} equals
$-\frac{p-1}{p} ( T_{s,p}(\Omega))^\frac{1}{p-1} $, where the {\em $(s,p)$-torsional rigidity} is defined by
\begin{equation}
\label{Trigidity}
	 T_{s,p}(\Omega):=\max\big\{\|u\|_{L^1(\Omega)}^p\colon u\in \mathcal{D}_0^{s,p}(\Omega)\,, \ [u]_{s,p}^p=1\big\} \,.
\end{equation}

We point out that no Lavrentiev's phaenomenon occurs between $\mathcal{C}^\infty_0(\Omega)$ and
$\mathcal{D}_0^{s,p}(\Omega)$ in \eqref{3.1}. 
More precisely, we get the same value in \eqref{3.1} if
instead of minimising over $\mathcal{D}_0^{s,p}(\Omega)$
we take the infimum over $C^\infty_0(\Omega)$.
Indeed, it is clear that the latter is a quantity greater than or equal to \eqref{3.1},
due to the inclusion
$C^\infty_0(\Omega)\subset \mathcal{D}_0^{s,p}(\Omega)$, and
the reverse inequality also holds
by the definition of  $\mathcal{D}^{s,p}_0(\Omega)$ and by the compactness of its
embedding in $L^1(\Omega)$.

Since
the $(s,p)$-torsion function on $\Omega$ is obtained by minimizing a convex energy on $\mathcal{D}_0^{s,p}(\Omega)$,
it is the unique solution of
\begin{equation}
\label{3.1eq}
\iint_{\mathbb R^{2N}} \frac{|w(x)-w(y)|^{p-2}(w(x)-w(y))}{|x-y|^{N+sp}}(\varphi(x)-\varphi(y))\,dx\,dy
=\int_\Omega\varphi\,dx\,,\quad \text{for all $\varphi\in C^\infty_0(\Omega)$.}
\end{equation}
Simbolically, the Euler-Lagrange equation \eqref{3.1eq} can be written in the form \eqref{1.7}.

\begin{prop}\label{locale}
If $\mathcal{D}_0^{s,p}(\Omega)\hookrightarrow L^1(\Omega)$
is compact, then $w_{s,p,\Omega}\in L^\infty(\Omega)$. Moreover, if $sp<N$,
\begin{equation}
\label{2.1.1}
\|w_{s,p,\Omega}\|_{L^\infty(\Omega)} \le  \frac{N+sp'}{sp'}\mathcal{S}(N,s,p)^\frac{N}{Np+sp-N}  \left(\int_\Omega w_{s,p,\Omega}\,dx\right)^\frac{sp'}{N+sp'}\,.
\end{equation}
\end{prop}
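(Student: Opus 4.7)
The plan is a Stampacchia-type level-set argument starting from the Euler-Lagrange equation \eqref{3.1eq}. Write $w:=w_{s,p,\Omega}$, and for $k\geq 0$ set $w_k:=(w-k)_+$, $a(k):=|\{w>k\}|$, and $\phi(k):=\int_\Omega w_k\,dx$. The key pointwise inequality is
\[
|a-b|^{p-2}(a-b)\bigl((a-k)_+-(b-k)_+\bigr)\geq |(a-k)_+-(b-k)_+|^p,\qquad a,b\in\mathbb R,\ k\geq 0,
\]
which is checked by distinguishing cases according to the positions of $a$ and $b$ relative to $k$. Since the truncation $(\cdot-k)_+$ does not increase the Gagliardo seminorm, $w_k\in\mathcal{D}_0^{s,p}(\Omega)$; and, since $w$ minimises a strictly convex energy on $\mathcal{D}_0^{s,p}(\Omega)$, equation \eqref{3.1eq} extends by density from $C^\infty_0(\Omega)$ to test functions in $\mathcal{D}_0^{s,p}(\Omega)$. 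Testing against $\varphi=w_k$ and invoking the pointwise inequality above yields
\[
[w_k]_{s,p}^p\leq \int_\Omega w_k\,dx=\phi(k).
\]

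In the case $sp<N$, combining the fractional Sobolev inequality \eqref{fracsob}, in the form $\mathcal{S}(N,s,p)\|w_k\|_{p^\ast_s}^p\leq [w_k]_{s,p}^p$, with H\"older's inequality $\phi(k)\leq \|w_k\|_{p^\ast_s}\,a(k)^{1-1/p^\ast_s}$ produces
\[
\phi(k)^{p-1}\leq \mathcal{S}(N,s,p)^{-1}\,a(k)^{(Np-N+sp)/N}.
\]
Since $\phi$ is absolutely continuous with $\phi'(k)=-a(k)$ a.e., setting $\gamma:=N(p-1)/(Np+sp-N)$ (for which one has the useful identities $1-\gamma=sp'/(N+sp')$ and $(1-\gamma)^{-1}=(N+sp')/sp'$) this becomes the autonomous differential inequality
\[
-\frac{d}{dk}\,\phi(k)^{1-\gamma}\geq (1-\gamma)\,\mathcal{S}(N,s,p)^{N/(Np+sp-N)}.
\]
Integrating over $[0,K]$ and letting $K\to\|w\|_{L^\infty(\Omega)}$, so that $\phi(K)\to 0$, then leaves exactly the bound \eqref{2.1.1}, using $\phi(0)=\|w\|_{L^1(\Omega)}$.

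For the $L^\infty$ claim when $sp\geq N$, an analogous iteration applies after replacing \eqref{fracsob} by the appropriate Gagliardo-Nirenberg inequality from Lemma~\ref{lm:GN}: the compactness hypothesis, together with that lemma, guarantees that $w$ belongs to $L^r(\Omega)$ for a range of exponents $r>1$ wide enough for the level-set iteration to close and yield $w\in L^\infty(\Omega)$. The only delicate step, rather than a serious obstacle, is the justification that the truncations $w_k$ are admissible test functions in \eqref{3.1eq}; this is handled by the density argument described above, together with the standard fact that truncation acts as a contraction on the Gagliardo seminorm.
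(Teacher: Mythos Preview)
Your argument is essentially the paper's own: the same truncation $w_k=(w-k)_+$ is used as a test function in \eqref{3.1eq}, the same pointwise inequality yields $[w_k]_{s,p}^p\le\phi(k)$, and the same combination of the Sobolev inequality with H\"older gives the differential inequality $\phi(k)^\gamma\lesssim -\phi'(k)$, which integrates to the bound \eqref{2.1.1} (the paper writes $\varepsilon(k)$ for your $\phi(k)$). The only difference is cosmetic: for $sp>N$ the paper bypasses the iteration entirely via \eqref{GN1} with $r=\infty$, whereas you fold this into the general $sp\ge N$ discussion, and for $sp=N$ the paper is slightly more explicit about which instance of \eqref{GN2} produces an exponent $\beta(t)<1$ so that the ODE closes.
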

\begin{proof}
Let us abbreviate $w_{s,p,\Omega}$ to $w$. If $sp>N$, \eqref{2.1.1} is a direct consequence of the Gagliardo-Nirenberg type inequality \eqref{GN1}, with $q=1$ and $r=\infty$, hence we may assume that $sp\le N$.

We first prove \eqref{2.1.1} in the case when $sp<N$. To do so, we fix
$k>0$ and we note that
the function defined by truncation setting $\varphi_k(x)=\max\{ w(x)-k,0\}$,
is an admissible test function for \eqref{3.1eq}.
We let $A_k=\{x\in\mathbb R^N\colon w(x)>k\}$ and we observe that 
the set $\mathcal{W}_k=A_k\times (\mathbb R^N\setminus A_k)$ 
is contained in $\{(x,y)\in \mathbb R^N\times\mathbb R^N\colon w(x)-w(y)\ge w(x)-k\ge0\}$.  Therefore
\begin{equation}
\label{2.1.2}
\begin{split}
\iint_{\mathcal{W}_k}
	\frac{|w(x)-w(y)|^{p-2}}{|x-y|^{N+sp}} (w(x)-w(y))(w(x)-k)\,dy\,dx
	\ge
	\iint_{\mathcal{W}_k}\frac{|\varphi_k(x)-\varphi_k(y)|^p}{|x-y|^{N+sp}}\,dx\,dy\,.
\end{split}
\end{equation}
Moreover
we have
\begin{subequations}
\label{2.1.3}
\begin{equation}
	\iint_{(\mathbb R^N\setminus A_k)\times (\mathbb R^N\setminus A_k)} \frac{|\varphi_k(x)-\varphi_k(y)|^p}{|x-y|^{N+sp}}\,dx\,dy=0\,,
\end{equation}
and
\begin{equation}
\begin{split}
\iint_{A_k\times A_k} 
\frac{|w(x)-w(y)|^{p-2}(w(x)-w(y))}{|x-y|^{N+sp}} & (\varphi_k(x)-\varphi_k(y))
 = \iint_{A_k\times A_k}
\frac{|\varphi_k(x)-\varphi_k(y)|^p}{|x-y|^{N+sp}}\,.
\end{split}
\end{equation} 
\end{subequations}
By the symmetry of
the left hand-side in \eqref{3.1eq} with respect to $(x,y)\mapsto(y,x)$, when  plug in $\varphi_k$ into \eqref{3.1eq}, combining \eqref{2.1.2} with the identities
\eqref{2.1.3} we arrive at
\begin{equation}
\label{2.1.5}
\int_{\mathbb R^N}\int_{\mathbb R^N} \frac{|\varphi_k(x)-\varphi_k(y)|^p}{|x-y|^{N+sp}}\,dx\,dy \le \int_{A_k}(w(x)-k)^p\,dx\,.
\end{equation}
On the other hand, by \eqref{fracsob}, we have that
\begin{equation}
\label{2.1.6}
	\int_{\mathbb R^n}\int_{\mathbb R^n} \frac{|\varphi_k(x)-\varphi_k(y)|^p}{|x-y|^{N+sp}}\,dx\,dy 
	\ge \mathcal{S}(N,s,p) |A_k|^{1-p-\frac{sp}{N}} \left(\int_{A_k}(w(x)-k)\,dx\right)^p\,.
\end{equation}
By Fubini's theorem, using the estimates \eqref{2.1.5} and \eqref{2.1.6} and dividing out, we obtain
\begin{equation}
\label{2.1.7}
\Big(\int_k^\infty|A_t|\,dt\Big)^{p-1}\le \mathcal{S}(N,s,p)^{-1} |A_k|^{-1+p+\frac{sp}{N}}\,.
\end{equation}
Since $w\in L^1(\Omega)$, $k\mapsto |A_k|$ is a non-increasing function converging to $0$ as $k\to\infty$.
Thus by \eqref{2.1.7} the function $\varepsilon(k)=\int_k^\infty|A_t|\,dt$ satisfies the differential inequality
\begin{equation}
\label{2.1.8}
\varepsilon(k)^\frac{N}{N+sp'}\le C(N,s,p)\big( -\varepsilon'(k)\big)
\end{equation}
with $C= \mathcal{S}(N,s,p)^\frac{-N}{N(p-1)+sp}$.
This gives that $w\in L^\infty(\Omega)$. Indeed, given $k_0>0$ and $k>k_0$ by integration we infer from \eqref{2.1.8} that
\begin{equation}
\label{2.1.9}
	k-k_0\le C \frac{N+sp'}{sp'} \big(\varepsilon(k_0)^\frac{sp'}{N+sp'}-\varepsilon(k)^\frac{sp'}{N+sp'}\big)\,.
\end{equation}
To get the quantitative bound \eqref{2.1.1},
we observe that \eqref{2.1.9} implies $\varepsilon(k)=0$ whenever
\begin{equation}\label{2.1.10}
k\ge k_0+ C \frac{N+sp'}{sp'} \Big( \int_{A_{k_0}}(w-k_0)\,dx\Big)^\frac{sp'}{N+sp'}\,.
\end{equation}
Clearly this implies that $|A_k|=0$ for $k$ satisfying \eqref{2.1.10}. Since we may take any $k_0>0$ in the lower bound \eqref{2.1.10},
this and the definition of $C$ give \eqref{2.1.1}.

To end the proof, the only case left to consider is that when $sp=N$. In this case, applying \eqref{GN2}
with exponents $q=1$ and $r=\frac{tN}{s}$, with $t>1$, and arguing as in the previous case we obtain
\begin{equation}
\label{borderline-decay}
	\varepsilon(k)^{\beta(t)}\le C(N,t,s)(-\varepsilon'(k))\,,
	\quad
	\text{with $\beta(t)= \frac{tp(p-1)-(t-1)((t+1)p-1)}{(tp-1)(p-1)}$.}
\end{equation}
Eventually, we choose $t>1$ so that $\beta(t) = 1-\frac{s}{N}$ and arguing as before we get \eqref{2.1.1}.
\end{proof}
We refer to~\cite{LL} for the following weak comparison principle. Similar results
have been proved in slightly different settings, see~\cite{BMS,IMS}
\begin{prop}\label{comparLL}
Let $w_i=w_{s,p,\Omega_i}$ where $\Omega_i$ is a bounded open set. If
$\Omega_1\subset\Omega_2$ then $w_1\le w_2$.
\end{prop}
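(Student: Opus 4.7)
The plan is the classical ``test with the positive part of the difference'' technique, exploiting strict monotonicity of $J_p(t):=|t|^{p-2}t$. I would first note that $w_1,w_2\ge 0$ a.e.\ in $\mathbb{R}^N$, which is immediate by minimality in \eqref{3.1}, since replacing $w_i$ by $|w_i|$ cannot enlarge the Gagliardo seminorm and can only enlarge $\int u\,dx$. Then I would take $\varphi:=(w_1-w_2)_+$ as test function: because $w_1\equiv 0$ on $\mathbb{R}^N\setminus\Omega_1$ and $w_2\ge 0$ everywhere, $\varphi$ is supported in $\Omega_1$, and a standard truncation estimate for the Gagliardo seminorm places $\varphi$ in $\mathcal{D}_0^{s,p}(\Omega_1)\subset\mathcal{D}_0^{s,p}(\Omega_2)$, so by density of $C^\infty_0(\Omega_i)$ it is admissible in \eqref{3.1eq} for both indices. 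Plugging it into \eqref{3.1eq} in turn for $w_1$ and $w_2$ and subtracting, and using that $\int_{\Omega_2}\varphi\,dx=\int_{\Omega_1}\varphi\,dx$ because $\mathrm{supp}(\varphi)\subset\Omega_1$, produces
\begin{equation*}
\iint_{\mathbb{R}^{2N}}\frac{\bigl(J_p(w_1(x)-w_1(y))-J_p(w_2(x)-w_2(y))\bigr)\bigl(\varphi(x)-\varphi(y)\bigr)}{|x-y|^{N+sp}}\,dx\,dy=0.
\end{equation*}

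The core step is the pointwise nonnegativity of this integrand. Set $a:=w_1(x)-w_1(y)$, $b:=w_2(x)-w_2(y)$, and $A:=\{w_1>w_2\}$. On $A^c\times A^c$ both $\varphi(x)$ and $\varphi(y)$ vanish. On $A\times A$ one has $\varphi(x)-\varphi(y)=a-b$, hence the integrand equals $(J_p(a)-J_p(b))(a-b)\ge 0$ by monotonicity of $J_p$. On the mixed piece $A\times A^c$ the identity $a-b=(w_1(x)-w_2(x))+(w_2(y)-w_1(y))$ combined with $w_1(x)>w_2(x)$ and $w_2(y)\ge w_1(y)$ forces $a>b$ strictly, while $\varphi(x)-\varphi(y)=w_1(x)-w_2(x)>0$; strict monotonicity of $J_p$ then makes the integrand strictly positive there. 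The piece $A^c\times A$ is handled by symmetry of the integrand under $(x,y)\mapsto(y,x)$.

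Combining nonnegativity with the vanishing of the total integral, the integrand must be zero a.e., and the strict positivity on the mixed quadrants forces $|A\times A^c|=0$. Since $\Omega_2$ is bounded, $|A^c|\ge|\mathbb{R}^N\setminus\Omega_2|=\infty$, so by Fubini $|A|=0$, i.e., $w_1\le w_2$ a.e., as required. The key delicate point is the sign analysis on the mixed quadrants, which uses crucially both the nonnegativity of $w_2$ and the fact that $w_1$ vanishes outside $\Omega_1$; the verification that $(w_1-w_2)_+$ lies in $\mathcal{D}_0^{s,p}(\Omega_1)$ is a further subtlety deserving care, since the homogeneous space is not in general a space of distributions.
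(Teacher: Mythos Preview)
Your argument is correct and is essentially the same approach as the paper's. The paper observes that $J(w_2,\varphi)-J(w_1,\varphi)\ge 0$ for nonnegative test functions and then defers the rest to the weak comparison principle of \cite[Lemma~9]{LL}; your proof simply unpacks that lemma, carrying out the standard ``test with $(w_1-w_2)_+$ and do the quadrant sign analysis'' directly, so the substance is identical.
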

\begin{proof}
Setting
\[
	J(w_i,\varphi) = \int_{\mathbb R^N}\int_{\mathbb R^N} \frac{|w_i(x)-w_i(y)|^{p-2}(w_i(x)-w_i(y))(\varphi(x)-\varphi(y))}{|x-y|^{N+sp}}\,dx\,dy
\]
clearly we have 
\[
	J(w_2,\varphi)-J(w_1,\varphi) = \int_{\Omega_2\setminus \Omega_1}\varphi\,dx\ge0\,,
\]
for all $\varphi\in C^\infty_0(\Omega_2)$ with $\varphi\ge0$. The conclusion then follows arguing as in~\cite[Lemma 9]{LL}.
\end{proof}

\subsection{The general case} In view of Proposition~\ref{comparLL}, we can define the fractional torsion function 
on arbitrary open sets $\Omega\subset\R^N$ as follows.
\begin{defi}\label{T}
Given an open set $\Omega\subset\mathbb R^N$ the
{\em $(s,p)$-torsion function} of $\Omega$ is defined by 
\begin{equation}
\label{T}
	w_{s,p,\Omega}(x) = \lim_{r\to\infty} w_{r}(x)\,, \qquad \text{for every $x\in\Omega$,}
\end{equation}
where we set
\begin{equation}\label{TT}
w_{r}(x) = \begin{cases}
w_{B_r(0)\cap\Omega}(x)\,, & \qquad \text{if $x\in B_r(0)\cap\Omega$,}\\
0\,, &\qquad \text{otherwise,}
\end{cases}
\end{equation}
for all $r>r_0=\inf\{ \rho>0\colon |B_\rho(0)\cap \Omega|>0\}$.
\end{defi}
We shall often identify $w$ with its extension to the whole space $\R^N$ with $w\equiv 0$ in $\R^N\setminus\Omega$.

\begin{rmq}
Note that the torsion function is well defined.
First of all the limit in \eqref{T} makes sense
by Proposition~\ref{comparLL}.
Moreover, for every open set $\Omega$ for which the embedding $\mathcal{D}_0^{s,p}(\Omega)\hookrightarrow L^1(\Omega)$
is compact, the function
 $w_r$  converges, as $r\to\infty$, to the unique solution  of \eqref{3.1}.
Indeed, using $w_r$ first as a test function in its equation (i.e., \eqref{3.1eq} with $\Omega\cap B_r(0)$
in place of $\Omega$)
and then as a competitor in \eqref{Trigidity} (see~\cite[Lemma 2.4]{BR} where a similar task is carried out in detail) we get
\[
[w_r]_{s,p}^p  = \|w_r\|_{L^1(\Omega)} \le [w_r]_{s,p} T_{s,p}(\Omega)^\frac{1}{p}\,,
\]
and we conclude by the reflexivity of $\mathcal{D}_0^{s,p}(\Omega)$ and the compactness
of its embedding in $L^1(\Omega)$.
\end{rmq}

\begin{rmq}
We point out that $w_{s,p,\Omega}>0$ in $\Omega$.
To see this we may assume with no restriction $\Omega$ to be bounded,
since  \eqref{T} is a pointwise monotone limit.
Then the embedding $\mathcal{D}_0^{s,p}(\Omega)\hookrightarrow
L^1(\Omega)$ is compact, and $w_{s,p,\Omega}$ solves \eqref{3.1eq}.
Therefore, the conclusion in this case follows by the minimum principle (see, e.g.,~\cite[Appendix A]{BF}).
\end{rmq}

\section{Non-local Torsional Hardy inequalities}\label{s:4}

We begin this section with a fractional Hardy-type inequality involving the torsion function.

\begin{prop}\label{TH}
Let $1{<}p{<}\infty$, $0{<}s{<}1$. Let $\Omega\subset\R^N$ be an open set such that $\mathcal{D}_0^{s,p}(\Omega)\hookrightarrow L^1(\Omega)$ is compact. Then
\begin{equation*}
\int_\Omega \frac{|u|^p}{w_{s,p,\Omega}^{p-1}}\,dx \le \iint_{\R^{2N}} \frac{| u(x)-u(y)|^p}{|x-y|^{N+sp}}\,dx\,dy\,,
\quad \text{for all $u\in \mathcal{D}_0^{s,p}(\Omega)$.}
\end{equation*}
\end{prop}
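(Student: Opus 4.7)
The plan is to test the torsion equation \eqref{3.1eq} against the ``Picone test function'' $u^p/w_{s,p,\Omega}^{p-1}$ and exploit a pointwise non-local Picone inequality to control the resulting bilinear form by the Gagliardo seminorm of $u$.

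First, by the density of $C^\infty_0(\Omega)$ in $\mathcal{D}_0^{s,p}(\Omega)$, by Fatou's lemma applied to the left-hand side, and by the standard estimate $[|u|]_{s,p}\le[u]_{s,p}$, it suffices to establish the inequality for a fixed nonnegative $u\in C^\infty_0(\Omega)$. Next, write $w=w_{s,p,\Omega}$ and regularise by setting $w_\varepsilon:=w+\varepsilon$ with $\varepsilon>0$; by the positivity remark at the end of Section~\ref{s:2} one has $w>0$ in $\Omega$, so $w_\varepsilon\ge\varepsilon$ on the compact support of $u$. Define
\[
\varphi_\varepsilon:=\frac{u^p}{w_\varepsilon^{p-1}}\,.
\]
This is a bounded nonnegative function compactly supported in $\Omega$, and a direct estimate of its Gagliardo seminorm (splitting $\varphi_\varepsilon(x)-\varphi_\varepsilon(y)$ as $w_\varepsilon(y)^{-(p-1)}(u(x)^p-u(y)^p)+u(y)^p(w_\varepsilon(x)^{-(p-1)}-w_\varepsilon(y)^{-(p-1)})$ and using the smoothness of $u$ together with $w\in\mathcal{D}_0^{s,p}(\Omega)$) shows $\varphi_\varepsilon\in\mathcal{D}_0^{s,p}(\Omega)$, so it is admissible in \eqref{3.1eq}.

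The key pointwise ingredient is the discrete non-local Picone inequality of Brasco--Franzina: for every $a,b\ge0$ and every $c,d>0$,
\[
|c-d|^{p-2}(c-d)\left(\frac{a^p}{c^{p-1}}-\frac{b^p}{d^{p-1}}\right)\le|a-b|^p\,.
\]
Applying it with $a=u(x)$, $b=u(y)$, $c=w_\varepsilon(x)$, $d=w_\varepsilon(y)$ and observing that $w_\varepsilon(x)-w_\varepsilon(y)=w(x)-w(y)$, one gets a pointwise bound on the integrand appearing on the left-hand side of \eqref{3.1eq} with test function $\varphi_\varepsilon$. Integrating against $|x-y|^{-N-sp}$ over $\R^{2N}$ and using \eqref{3.1eq} yields
\[
\int_\Omega\frac{u^p}{w_\varepsilon^{p-1}}\,dx\le\iint_{\R^{2N}}\frac{|u(x)-u(y)|^p}{|x-y|^{N+sp}}\,dx\,dy\,.
\]
Finally, since $w>0$ in $\Omega$, the integrand on the left increases monotonically to $u^p/w^{p-1}$ as $\varepsilon\to0^+$, so monotone convergence closes the argument.

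The main obstacle is the admissibility step: one has to make sure that $\varphi_\varepsilon$ is a genuine element of $\mathcal{D}_0^{s,p}(\Omega)$, i.e., that it can be reached by $C^\infty_0(\Omega)$ functions in the Gagliardo norm. A cleaner route, if the direct bound on $[\varphi_\varepsilon]_{s,p}$ turns out to be delicate, is to first replace $w$ in the denominator by $\min\{w,k\}+\varepsilon$ (which is globally bounded and bounded away from zero), apply the Picone inequality with $c=\min\{w(x),k\}+\varepsilon$, $d=\min\{w(y),k\}+\varepsilon$ (whose difference is dominated by $|w(x)-w(y)|$, preserving the sign-controlled Picone bound), and then let first $k\to\infty$ and then $\varepsilon\to0^+$. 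Everything else---the Picone inequality, the weak formulation, and the monotone passage to the limit---is routine.
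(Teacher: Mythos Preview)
Your argument is correct and follows essentially the same route as the paper: test \eqref{3.1eq} with $\varphi_\varepsilon=u^p/(w+\varepsilon)^{p-1}$, apply the discrete Picone inequality \eqref{piccone}, and pass to the limit $\varepsilon\to0^+$ by monotone convergence (the paper says Fatou, which here is the same). The only differences are cosmetic: the paper works directly with nonnegative $u\in\mathcal{D}_0^{s,p}(\Omega)$ and justifies admissibility of $\varphi_\varepsilon$ in one line by noting that $t\mapsto(t+\varepsilon)^{1-p}$ is Lipschitz on $[0,\infty)$ and citing \cite[Lemma~2.4]{BC}, whereas you first reduce to $u\in C^\infty_0(\Omega)$ and sketch (or bypass via truncation of $w$) the admissibility by hand; since under the compactness assumption Proposition~\ref{locale} already gives $w\in L^\infty(\Omega)$, both justifications go through without difficulty.
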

\begin{proof}
We prove the inequality for any fixed $u\in\mathcal{D}_0^{s,p}(\Omega)$ with $u\ge0$, which is sufficient.
To do so, let $\varepsilon>0$, and
let $w=w_{s,p,\Omega}$. 
Since $f(t)=(t+\varepsilon)^{1-p}$, $t>0$,
is a Lipschitz function,  $\varphi=u^p(w+\varepsilon)^{1-p}$ is an admissible test function for equation \eqref{3.1eq} (see, e.g.,~\cite[Lemma 2.4]{BC}). Thus,
setting  $w_\varepsilon = w+\varepsilon$,
\[
\int_\Omega \frac{u^{p-1}}{(w+\varepsilon)^{p-1}}\,dx=\iint_{\R^{2N}} \frac{\big| w(x)-w(y)\big|^{p-2}\big(w(x)-w(y)\big)}{|x-y|^{N+sp}}\left( \frac{u(x)^p}{w_\varepsilon(x)^{p-1}}
-\frac{u(y)^p}{w_\varepsilon(y)^{p-1}}\right)\,dx\,dy \,.
\]
Hence, thanks to the following discrete Picone-type inequality (see, e.g.,~\cite[Proposition 4.2]{BF})
\begin{equation}
\label{piccone}
| a - b|^{p-2}(a-b) \left( \frac{c^p}{a^{p-1}}-\frac{d^p}{b^{p-1}}\right)\le |c-d|^p\,,
\qquad \text{for all $a,b>0$ and $c,d\ge0$,}
\end{equation}
we get the conclusion by Fatou's Lemma using the arbitrariness of $\varepsilon>0$.
\end{proof}
\begin{cor}Let $1{<}p{<}\infty$,  let $0{<}s{<}1$, and
let $\Omega\subset \R^N$ be any open set. Then
\begin{equation}\label{coroineq}
	\int_\Omega\frac{|u|^p}{w_{s,p,\Omega}^{p-1}}\,dx
	\le \iint_{\R^{2N}} \frac{|u(x)-u(y)|^p}{|x-y|^{N+sp}}\,dx\,dy\,,\qquad
	\text{for all $u\in C^\infty_0(\Omega)$}
\end{equation}
(with the convention that $\frac{c}{\infty}=0$ for all $c\in\R$.)
\end{cor}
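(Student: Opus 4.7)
The plan is to reduce to the compact-embedding case, already treated in Proposition~\ref{TH}, via an exhaustion of $\Omega$ by bounded open sets. Fix $u\in C^\infty_0(\Omega)$ and set $\Omega_r = \Omega\cap B_r(0)$. Because $\mathrm{supp}(u)$ is compact in $\Omega$, there is $r_1>r_0$ such that $u\in C^\infty_0(\Omega_r)$ for every $r>r_1$. Each $\Omega_r$ is bounded, so the embedding $\mathcal{D}_0^{s,p}(\Omega_r)\hookrightarrow L^1(\Omega_r)$ is compact, and Proposition~\ref{TH} applies on $\Omega_r$. Writing $w_r=w_{s,p,\Omega_r}$ (extended by $0$ outside $\Omega_r$), we obtain
\begin{equation*}
\int_\Omega \frac{|u|^p}{w_r^{p-1}}\,dx \le \iint_{\R^{2N}} \frac{|u(x)-u(y)|^p}{|x-y|^{N+sp}}\,dx\,dy, \qquad \text{for every } r>r_1,
\end{equation*}
where the LHS is rewritten as an integral on all of $\Omega$ using that $u\equiv 0$ outside $\Omega_r$.

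Next, I pass to the limit $r\to\infty$. By Proposition~\ref{comparLL}, the family $(w_r)$ is non-decreasing in $r$, and by the pointwise definition \eqref{T} it converges everywhere on $\Omega$ to $w_{s,p,\Omega}$. Consequently, with the convention $c/\infty=0$, the non-negative integrands $|u|^p/w_r^{p-1}$ are non-increasing and converge pointwise to $|u|^p/w_{s,p,\Omega}^{p-1}$. Either Fatou's lemma applied to the left-hand side, or dominated convergence with majorant the $r=r_1+1$ term (finite by the inequality above), then yields \eqref{coroineq}.

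The argument is essentially routine once monotonicity from Proposition~\ref{comparLL} and the pointwise definition of $w_{s,p,\Omega}$ are available. The only point worth flagging is that $w_{s,p,\Omega}$ may well take the value $+\infty$ at points of $\Omega$ when $\lambda_{p,1}^s(\Omega)=0$; the stated convention makes \eqref{coroineq} meaningful in that case, and the inequality remains valid (in the degenerate situation where $w_{s,p,\Omega}\equiv+\infty$ it is in fact trivially so, since the LHS vanishes).
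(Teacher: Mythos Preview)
Your proof is correct and follows essentially the same route as the paper's own argument: apply Proposition~\ref{TH} on the bounded exhaustion $\Omega_r=\Omega\cap B_r$, then pass to the limit using Definition~\ref{T} and Fatou's lemma. Your write-up is in fact a bit more careful than the paper's, spelling out the monotonicity of $w_r$ and the option of dominated convergence, but the strategy is the same.
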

\begin{proof}We fix $u\in C^\infty_0(\Omega)$ and let $R_0>0$ be such that, for every
 $R>R_0$, $u$ is supported in the ball $B_R$ of radius $R$ about the origin. Then, setting $\Omega_R=\Omega\cap B_R$,
 by Proposition~\ref{TH} we have
\[
	\int_{\Omega_R}\frac{|u|^p}{w_{s,p,\Omega_R}^{p-1}}\,dx
	\le \iint_{\R^{2N}} \frac{ |u(x)-u(y)|^p}{|x-y|^{N+sp}}\,dx\,dy\,,
\]
for all $R>R_0$. Thus, in view of
Definition~\ref{T}, the desired inequality follows by Fatou Lemma.
\end{proof}

We end this section with  a variation on the torsional Hardy inequality
discussed in Proposition~\ref{TH}, containing an additional term.

\begin{teo}\label{THR}
Let $\Omega\subset \R^N$ be such that the embedding $\mathcal{D}_0^{s,p}(\Omega)$ into $L^1(\Omega)$
is compact and let $w$ be the $(s,p)$-torsion function on $\Omega$. Then
there exists a constant $C>0$, only depending on $p$, with
\begin{equation*}
	\int_\Omega \frac{|u|^p}{w^{p-1}}\,dx + 2\int_{\R^{N}}\int_{\R^N}\left\vert \frac{w(x)-w(y)}{w(x)+w(y)}
	\right\vert^p \frac{dy}{|x-y|^{N+sp}}|u(x)|^p\,dx \le C \iint_{\R^{2N}} \frac{|u(x)-u(y)|^p}{|x-y|^{N+sp}}\,dx\,dy\,,
\end{equation*}
for all $u\in \mathcal{D}_0^{s,p}(\Omega)$.
\end{teo}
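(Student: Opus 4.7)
The plan is to mimic the proof of Proposition~\ref{TH}, but to replace the elementary discrete Picone inequality \eqref{piccone} with a quantitatively refined version that retains a non-negative remainder of exactly the shape appearing in the statement. The key algebraic ingredient I would establish is the existence of constants $c_p, C_p>0$, depending only on $p$, such that
\[
|a-b|^{p-2}(a-b)\!\left(\frac{c^p}{a^{p-1}}-\frac{d^p}{b^{p-1}}\right)+c_p\!\left|\frac{a-b}{a+b}\right|^{p}\!(c^p+d^p)\le C_p\,|c-d|^p
\]
for all $a,b>0$ and $c,d\ge0$. The weight $|(a-b)/(a+b)|^p$ measures the relative asymmetry of the pair $(a,b)$ and quantifies how much strict convexity of $t\mapsto t^p$ is lost in the tangent-line argument underlying \eqref{piccone}.

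With such an improved inequality in hand, I would follow the steps of Proposition~\ref{TH}: fix $u\in\mathcal{D}_0^{s,p}(\Omega)$ with $u\ge0$, set $w_\varepsilon=w+\varepsilon$, and plug the admissible test function $\varphi=u^p/w_\varepsilon^{p-1}$ into \eqref{3.1eq}. Applying the refined inequality pointwise in the integrand with $(a,b,c,d)=(w_\varepsilon(x),w_\varepsilon(y),u(x),u(y))$, and noting that $w_\varepsilon(x)-w_\varepsilon(y)=w(x)-w(y)$, I would obtain
\[
\int_\Omega\!\frac{u^p}{w_\varepsilon^{p-1}}\,dx+c_p\!\iint_{\R^{2N}}\!\left|\frac{w(x)-w(y)}{w_\varepsilon(x)+w_\varepsilon(y)}\right|^{p}\!\frac{u(x)^p+u(y)^p}{|x-y|^{N+sp}}\,dx\,dy\le C_p\!\iint_{\R^{2N}}\!\frac{|u(x)-u(y)|^{p}}{|x-y|^{N+sp}}\,dx\,dy.
\]
Passing to the limit $\varepsilon\to0^+$ by monotone convergence on both left-hand integrals (the integrands are non-negative and pointwise monotone increasing as $\varepsilon\downarrow0$, since $w>0$ in $\Omega$), and using the symmetry of the kernel $|(w(x){-}w(y))/(w(x){+}w(y))|^p/|x-y|^{N+sp}$ to rewrite $\iint K(u(x)^p+u(y)^p)=2\iint K\,u(x)^p$, would yield the desired inequality for $u\ge0$ with $C=C_p/\min\{1,c_p\}$. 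The extension to arbitrary $u\in\mathcal{D}_0^{s,p}(\Omega)$ follows by replacing $u$ with $|u|$, since the left-hand side is unchanged and $[|u|]_{s,p}\le[u]_{s,p}$.

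The main obstacle is the refined discrete Picone inequality itself. This is an elementary but delicate pointwise inequality in four positive variables; by its homogeneity in $(a,b)$ and in $(c,d)$ one may normalise, say, $a+b=2$ and $c+d=1$, reducing the statement to a two-parameter estimate. I expect the argument to split according to whether $p\ge2$ or $1<p<2$, in each case using a quantitative expansion of $|c-d|^p$ that sharpens the tangent-line bound exploited in the proof of \eqref{piccone}; the remainder term $|(a-b)/(a+b)|^p(c^p+d^p)$ emerges naturally as the leading-order correction in the regime where $a$ and $b$ are far apart.
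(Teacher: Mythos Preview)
Your proposal is correct and follows essentially the same route as the paper: the paper states that the proof is completely analogous to that of Proposition~\ref{TH}, replacing \eqref{piccone} with the refined Picone-type inequality with remainder (which it cites as \cite[Lemma~A.5]{BC} rather than proving from scratch). Your handling of the $\varepsilon\to0^+$ limit, the symmetrisation of the remainder term, and the reduction to $u\ge0$ are all accurate fillings-in of the details the paper omits.
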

We skip the proof of Theorem~\ref{THR} because it is completely analogous to that of Proposition~\ref{TH}, except that 
instead of \eqref{piccone} one can exploit a Picone-type inequality
with a remainder term. More precisely, by~\cite[Lemma A.5]{BC} there exist positive constants 
$C_1,C_2$, only depending on $p$, with
\[
	|a-b|^{p-2}(a-b)\left( \frac{c^p}{a^{p-1}}-\frac{d^p}{b^{p-1}}\right)
	+C_1\left\vert\frac{a-b}{a+b}\right\vert^p(c^p+d^p)\le C_2 |c-d|^p\,,
\quad \text{for all $a,b>0$ and $c,d\ge0$.}
\]

\section{Proofs of the main results}\label{s:5}
For every open set $\Omega$ in $\R^N$,
for every $1<p<\infty$, $1\le q<p^\ast_s$, and $0<s<1$, we have
\begin{equation}
\label{poincare}
	\lambda_{p,q}^s(\Omega) \left(
		\int_\Omega|u|^q\,dx
	\right)^\frac{p}{q}
	\le
	\iint_{\R^{2N}} \frac{|u(x)-u(y)|^p}{|x-y|^{N+sp}}\,dx\,dy\,,\qquad
	\text{for all $u\in C^\infty_0(\Omega)$,}
\end{equation}
where
\begin{equation}
\label{lambda}
\lambda_{p,q}^s(\Omega) = \inf_{u\in C^\infty_0(\Omega)} \Big\{ \iint_{\mathbb R^{2N}} \frac{|u(x)-u(y)|^p}{|x-y|^{N+sp}}\,dx\,dy\colon
\int_\Omega|u|^q\,dx=1 \Big\}\,,
\end{equation}
and $\lambda_{p,q}^s(\Omega) $ is the best possible constant for this inequality to hold.

\begin{rmq}
\label{cauchypoincare}
The Poincar\'e-type inequality \eqref{poincare} implies that $\mathcal{D}_0^{s,p}(\Omega)$
is a function space, continuously included in $L^q(\Omega)$, whenever $\lambda_{p,q}^s(\Omega)>0$. Indeed, in this case, if $(u_n)_n\subset C^\infty_0(\Omega)$ is
 a Cauchy sequence in $\mathcal{D}_0^{s,p}(\Omega)$ then
 by \eqref{poincare} it is a Cauchy sequence in the Banach space $L^q(\Omega)$ as well.
\end{rmq}

We now prove Theorem~\ref{main}, relating the positivity of $\lambda_{p,q}^s(\Omega)$ to the summability of the $(s,p)$-torsion function;
this is the non-local counterpart of~\cite[Theorems 1.2, 1.3]{BR},
and the conclusion is obtained by adapting to the fractional framework the arguments used in~\cite{BR} in the local setting
(for $1\le q\le p$).
The proofs are different depending on whether
$1\le q<p$,  or $p\le q<p^\ast_s$.
\subsection{Proof of Theorem~\ref{main} (case $1\le q<p$)}
Let $w_R=w_{s,p,\Omega\cap B_R}$ and $\beta\ge1$. Since $t\mapsto t^{\beta}$ 
is locally Lipschitz continuous and $w_R\in L^\infty(\Omega)$ (by Proposition~\ref{locale}),
$\varphi=w_R^{\beta}$ is an admissible test function for equation \eqref{3.1eq}. Therefore
\begin{equation*}
\iint_{\R^{2N}} \frac{\big|w_R (x)-w_R (y)\big|^{p-2}\big(w_R (x)-w_R (y)\big)\big(w_R (x)^\beta-w_R (y)^\beta\big)}{|x-y|^{N+sp}}\,dx\,dy
=\int_\Omega w_R ^\beta\,dx\,.
\end{equation*}
Applying the elementary inequality (see~\cite[Lemma C.1]{BLP})
\[
	|a-b|^{p-2} (a-b)(a^\beta-b^\beta) \ge \beta \left[\frac{p}{p+\beta-1}\right]^p\left\vert 
		a^{\frac{\beta+p-1}{p}}-b^\frac{\beta+p-1}{p}
	\right\vert^p\,,
\]
with $a= w_R (x)$ and $b=w_R (y)$ and integrating, we deduce that
\begin{equation}\label{5.5}
 \beta \left[\frac{p}{p+\beta-1}\right]^p
 \iint_{\R^{2N}} \frac{\left\vert 
		w_R (x)^{\frac{\beta+p-1}{p}}-w_R (y)^\frac{\beta+p-1}{p}
	\right\vert^p}{|x-y|^{N+sp}}\,dx\,dy\le \int_\Omega w_R ^\beta\,dx\,.
\end{equation}
We observe that 
\begin{equation}\label{5.6}
 \iint_{\R^{2N}} \frac{\left\vert 
		w_R (x)^{\frac{\beta+p-1}{p}}-w_R (y)^\frac{\beta+p-1}{p}
	\right\vert^p}{|x-y|^{N+sp}}\,dx\,dy
	\ge \lambda_{p,q}^s(\Omega) \left(
		\int_{\Omega\cap B_R} w_R ^{\frac{\beta+p-1}{p}q}
	\right)^\frac{p}{q}\,,
\end{equation}
where we also used
the fact that $\lambda_{p,q}^s(\Omega)\le \lambda_{p,q}^s(\Omega\cap B_R)$,
in view  of the obvious
monotonicity of the quantity~\eqref{lambda}
with respect to set inclusion.

Combining \eqref{5.6} with \eqref{5.5} we get
\begin{equation*}
 \beta \left[\frac{p}{p+\beta-1}\right]^p \lambda_{p,q}^s(\Omega) \left(
		\int_{\Omega\cap B_R} w_R ^{\frac{\beta+p-1}{p}q}
	\right)^\frac{p}{q}
	\le 
	\int_\Omega w_R ^\beta\,dx\,.
\end{equation*}
Taking $\beta\ge1$ with $\beta = \frac{\beta+p-1}{p}q$, we obtain
\begin{equation}\label{usa}
\lambda_{p,q}^s(\Omega) \left(
		\int_{\Omega\cap B_R} w_R ^{\frac{p-1}{p-q}q}
	\right)^\frac{p-q}{q}
	\le \frac{1}{q}\frac{q-1}{p-1}\left( \frac{q-1}{p-q}\right)^{p-1}\,.
\end{equation}

Recall that $R>0$ was arbitrary. 
Hence, if $\lambda_{p,q}^s(\Omega)>0$, from \eqref{usa} we deduce
that
\begin{equation}\label{ub}
\|w_{s,p,\Omega}\|_{L^{\frac{p-1}{p-q}q}(\Omega)} \le
\left( \frac{1}{\lambda_{p,q}^s(\Omega)} \frac{q-1}{q(p-1)} \left( \frac{q-1}{p-q}\right)^{p-1}\right)^{\frac{p-1}{p-q}q}\,,
\end{equation}
by Definition~\ref{T} and Fatou's Lemma. This concludes the proof. \qed

\subsection{Proof of Theorem~\ref{main} (case $q\ge p$)} 
We first assume that 
$\lambda_{p,q}(\Omega)>0$ 
and we prove that 
$w:=w_{s,p,\Omega}$ belongs to $L^\infty(\Omega)$.
More precisely, we show that
\begin{equation}
\label{BVB}
\|w\|_{L^\infty(\Omega)}\le C\lambda_{p,q}^s(\Omega)^{\frac{1}{1-q}}\,.
\end{equation}
The argument is due to~\cite[Theorem 9]{VDBB}. 
Up to an approximation of $\Omega$ with an increasing sequence of smooth open sets,
while proving \eqref{BVB} we may assume without any restriction that $\Omega$ is itself smooth and bounded.
In particular, in  view of Proposition~\ref{locale}, we may assume that $\|w\|_{L^\infty(\Omega)}<+\infty$.
We shall also require that $w (0)=\|w\|_{L^\infty(\Omega)}$,
which again causes no loss of generality (we may assume this up to a translation).

Let $\zeta\in C^\infty_0(\R^N)$ be a cut-off function
from $B_{\frac{R}{2}}$ to $B_{R}$, with $|\nabla\zeta|\le 2R^{-1}$.
Since by our assumptions $w\in L^\infty(\Omega)$, 
the function $u=w\zeta$ is an admissible competitor for the variational
problem \eqref{lambda}, and we have
\begin{equation}
\label{lambdae}
\lambda_{p,q}^s(\Omega) \le \frac{\displaystyle\iint_{\R^{2N}} \frac{|w(x)\zeta(x)-w(y)\zeta(y)|^p}{|x-y|^{N+sp}}\,dx\,dy}{
\displaystyle \int_\Omega w(x)^p\zeta(x)^p\,dx}\,.
\end{equation}

We first estimate the numerator in \eqref{lambdae}. By Proposition~\ref{locale}, we can test equation
\eqref{3.1eq}
with $\varphi= w\zeta^p$ (see, e.g.,~\cite[Lemma 2.4]{BC}), so as to get
\begin{equation}\label{sottosol2}
\iint_{\mathbb R^{2N}} \frac{|w(x)-w(y)|^{p-2}(w(x)-w(y))}{|x-y|^{N+sp}}(w(x)\zeta(x)^p-w(y)\zeta(y)^p)\,dx\,dy
=\int_{B_R}w\zeta^p\,dx\,.
\end{equation}
The double integral appearing in \eqref{sottosol2} splits into its contributions in 
$\mathcal{C}^+=\{(x,y)\in\R^{2N}\colon |y|>|x|\}$ and $\mathcal{C}^-=\R^{2N}\setminus \mathcal{C}^+$.
Subtracting and adding terms, the two contributions read respectively as
\begin{subequations}\label{5.2.pm}
\begin{equation}
\label{5.2.1}
\iint_{\mathcal{C}^+}\!\! \frac{|w(x)-w(y)|^p}{|x-y|^{N+sp}}\zeta(x)^p
+\iint_{\mathcal{C}^+}\!\! \frac{|w(x)-w(y)|^{p-2}(w(x)-w(y))}{|x-y|^{N+sp}}w(y)(\zeta(x)^p-\zeta(y)^p)
\end{equation}
and
\begin{equation}
\label{5.2.2}
\iint_{\mathcal{C}^-}\!\! \frac{|w(x)-w(y)|^p}{|x-y|^{N+sp}}\zeta(y)^p
+\iint_{\mathcal{C}^-}\!\! \frac{|w(x)-w(y)|^{p-2}(w(x)-w(y))}{|x-y|^{N+sp}}w(x)(\zeta(y)^p-\zeta(x)^p)\,.
\end{equation}
\end{subequations}

Let $\mathcal{A}_1^+ = (B_R\times B_R)\cap \mathcal{C}^+$ and $\mathcal{A}_2^+=(B_R\times (\Omega\setminus B_R))\cap\mathcal{C}^+$.
We observe that $\zeta(x)\ge\zeta(y)$ in $\mathcal{C}^+$, whence it follows that
$\zeta(x)^p-\zeta(y)^p\le p\zeta(x)^{p-1}|x-y|$ for all $(x,y)\in\mathcal{A}_1^+$,
provided that we opted for a radially symmetric cut-off with a decreasing radial profile, and clearly we have
$\zeta(x)^p-\zeta(y)^p=\zeta(x)$ for all $(x,y)\in\mathcal{A}_2^+$. Therefore
\begin{equation*}
\begin{split}
\iint_{\mathcal{C}^+}\!\! & \frac{|w(x)-w(y)|^{p-2}(w(x)-w(y))}{|x-y|^{N+sp}}w(y)(\zeta(x)^p-\zeta(y)^p)\,dx\,dy\\
&\le \frac{2p}{R}\iint_{\mathcal{A}_1^+}\left\vert \frac{w(x)-w(y)}{|x-y|^{\frac{N}{p}+s}} \zeta(x)\right\vert^{p-1}\!\!\!\!\!\!\!\!
\frac{w(y)\,dx\,dy}{|x-y|^{\frac{N}{p}+s-1}}
+
\iint_{\mathcal{A}_2^+}\left\vert \frac{w(x)-w(y)}{|x-y|^{\frac{N}{p}+s}} \right\vert^{p-1}\!\!\!\!\!\!\!\!
w(y)\frac{\zeta(x)^p\,dx\,dy}{|x-y|^{\frac{N}{p}+s-1}}
\end{split}
\end{equation*}
We write the right hand-side in the form $ \mathcal{I}_1^+ +\mathcal{I}_2^+$ and
we make repeatedly use of Young inequality
\(
	pa^{p-1}b\le (p-1)\frac{a^p}{\tau^\frac{p}{p-1}}+\tau^pb^p\,,
\)	
with a suitable  $\tau>0$ to be determined. Estimating $\mathcal{I}_1^+ $ we get
\[
\begin{split}
\mathcal{I}_1^+ & \le
\tfrac{p-1}{\tau^\frac{p}{p-1}} \iint_{\mathcal{A}_1^+} \frac{|w(x)-w(y)|^p}{|x-y|^{N+sp}}\zeta(x)^p\,dx\,dy
+\frac{\tau^p}{R^p} \iint_{\mathcal{A}^+_1} \frac{w(y)^p}{|x-y|^{N+sp-p}}\,dx\,dy\\
& \le \tfrac{p-1}{\tau^\frac{p}{p-1}} \iint_{\mathcal{A}_1^+} \frac{|w(x)-w(y)|^p}{|x-y|^{N+sp}}\zeta(x)^p\,dx\,dy
+ \tau^p w(0)^p \omega_NR^{N-p} \int_0^R \rho^{(1-s)p-1}\,d\rho\\
& \le \tfrac{p-1}{\tau^\frac{p}{p-1}} \iint_{\mathcal{A}_1^+} \frac{|w(x)-w(y)|^p}{|x-y|^{N+sp}}\zeta(x)^p\,dx\,dy
+ \tau^p\tfrac{ \omega_N}{(1-s)p} w(0)^p R^{N-sp}\,.
\end{split}
\]
Similarly, 
\[
\begin{split}
\mathcal{I}_2^+ & \le
	\tfrac{p-1}{\tau^\frac{p}{p-1}} \iint_{\mathcal{A}_2^+} \frac{|w(x)-w(y)|^p}{|x-y|^{N+sp}}\zeta(x)^p\,dx\,dy
	+\tau^p \iint_{\mathcal{A}_2^+} \frac{w(y)^p\zeta(x)^p}{|x-y|^{N+sp}}\,dx\,dy\\
	& \le 	\tfrac{p-1}{\tau^\frac{p}{p-1}} \iint_{\mathcal{A}_2^+} \frac{|w(x)-w(y)|^p}{|x-y|^{N+sp}}\zeta(x)^p\,dx\,dy
	+\tau^p \tfrac{\omega_N}{(1-s)p} w(0)^pR^{N-sp}\,.
\end{split}
\]
Summing up gives
\begin{subequations}\label{colpm}
\begin{equation}\label{colpiu}
\begin{split}
\iint_{\mathcal{C}^+}\!\! &  \frac{|w(x)-w(y)|^{p-2}(w(x)-w(y))}{|x-y|^{N+sp}}w(y)(\zeta(x)^p-\zeta(y)^p)\,dx\,dy\\
& 
\le
\tfrac{p-1}{\tau^\frac{p}{p-1}} \iint_{\mathcal{C}^+} \frac{|w(x)-w(y)|^p}{|x-y|^{N+sp}}\zeta(x)^p\,dx\,dy
+ \tau^p C(N,s,p) w(0)^p R^{N-sp}\,.
\end{split}
\end{equation}
A similar argument also proves that
\begin{equation}
\label{colmeno}
\begin{split}
\iint_{\mathcal{C}^-}\!\! &  \frac{|w(x)-w(y)|^{p-2}(w(x)-w(y))}{|x-y|^{N+sp}}w(x)(\zeta(y)^p-\zeta(x)^p)\,dx\,dy\\
& \le	\tfrac{p-1}{\tau^\frac{p}{p-1}} \iint_{\mathcal{C}^-} \frac{|w(x)-w(y)|^p}{|x-y|^{N+sp}}\zeta(y)^p\,dx\,dy
+ \tau^p C(N,s,p) w(0)^p R^{N-sp}\,.
\end{split}
\end{equation}

\end{subequations}

We use the sum of \eqref{colpiu} and \eqref{colmeno} to estimate from above the sum of \eqref{5.2.1} and \eqref{5.2.2}.
In the inequality which we arrive at, the term divided by $\tau^\frac{p}{p-1}$ can be absorbed.
Taking into account \eqref{sottosol2},
it follows that there exist $C_1,C_2>0$, only depending on $N,s,p$, with
\begin{equation}\label{absorb}
	\int_{B_R} w\zeta^p\,dx  \ge (1-C_1)
	\iint_{\R^{2N} }\frac{|w(x)-w(y)|^p}{|x-y|^{N+sp}}\max\{\zeta(x),\zeta(y)\}^p\,dx\,dy- C_2 w(0)^p R^{N-sp}\,.
\end{equation} 
On the other hand,  by standard manipulations we also have
\begin{equation*}
	[w\zeta]_{s,p}^p\lesssim \iint_{\R^{2N}} 
	\!\!\!\!\!\!\frac{|w(x)-w(y)|^p}{|x-y|^{N+sp}}\max\{\zeta(x),\zeta(y)\}^p
	+\iint_{\mathcal{C}^+}\!\!\!\!w(y)^p \frac{(\zeta(x)-\zeta(y))^p}{|x-y|^{N+sp}}
	+\iint_{\mathcal{C}^-}\!\!\!\!w(x)^p \frac{(\zeta(y)-\zeta(x))^p}{|x-y|^{N+sp}}
\end{equation*}
where $\lesssim$ means $\le$ up to constants depending only on $p$. By \eqref{absorb}
we deduce
\begin{equation}\label{numeratore0}
		[w\zeta]_{s,p}^p
	\le C_3(N,s,p)\Big( w(0) R^N+w(0)^pR^{N-sp}
	+\mathcal{J}_+
	+\mathcal{J}_-
\Big)\,,
\end{equation}
where, thanks to the fact that $|\nabla\zeta|\le C R^{-1}$ and $0\le\zeta\le1$, we have
\begin{subequations}\label{Jstorto}
\begin{equation}
\begin{split}
	\mathcal{J}_+ & :=
	\iint_{\mathcal{C}^+}\!\!\!\!w(y)^p \frac{(\zeta(x)-\zeta(y))^p}{|x-y|^{N+sp}}\,dx\,dy\\
	& =\iint_{\mathcal{A}_1^+}\!\!\!\!w(y)^p \frac{(\zeta(x)-\zeta(y))^p}{|x-y|^{N+sp}}\,dx\,dy
	+\iint_{\mathcal{A}_2^+}\!\!\!\!w(y)^p \frac{\zeta(x)^p}{|x-y|^{N+sp}}\,dx\,dy \\
	& \le w(0)^p\left[\iint_{B_R\times B_R} \frac{R^{-p}dx\,dy}{|x-y|^{N-(1-s)p}}
	+ \iint_{B_R\times(\R^N\setminus B_R)} \frac{dx\,dy}{|x-y|^{N+sp}}\right] \le C w(0)^pR^{N-sp}\,,
\end{split}
\end{equation}
and similarly
\begin{equation}
	\mathcal{J}_+ := \iint_{\mathcal{C}^-}\!\!\!\!w(x)^p \frac{(\zeta(y)-\zeta(x))^p}{|x-y|^{N+sp}}\,dx\,dy
	\le C w(0)^p R^{n-sp}\,,
\end{equation}
\end{subequations}
with $C>0$ depending only on $N,s,p$. Combining \eqref{Jstorto} with \eqref{numeratore0}
we obtain
\begin{equation}
\label{numeratore}
\iint_{\R^{2N}} \frac{|w(x)\zeta(x)-w(y)\zeta(y)|^p}{|x-y|^{N+sp}}\,dx\,dy \le C_4(N,s,p) \Big( w(0)R^N+w(0)^pR^{N-sp}\Big)\,.
\end{equation}

To estimate the denominator in \eqref{lambdae}, we
recall
the notation introduced in~\cite{DKP}
\[
	{\rm Tail}(\varphi,x_0,r) = \left(r^{sp}\int_{\R^N\setminus B_r(x_0) }\frac{|\varphi(x_0)|^{p-1}}{
	|x-x_0|^{N+sp}}\,dx\right)^\frac{1}{p-1}
\]
for the {\em non-local tail} and we
 make use of the fact that for every $\delta>0$ we have
\begin{equation}
\label{brapar0}
\|w\|_{L^\infty(B_{R/4})} \le C_5(N,s,p) \left[ \left(\intmed_{B_{R/2}}w^p\,dx\right)^\frac{1}{p}+
\big(1+\delta\, {\rm Tail}(w,0,\tfrac{R}{4}) \big)R^\frac{sp}{p-1}
\right]\,,
\end{equation}
which follows by the estimate of~\cite[Theorem 3.8]{BP}, applied\footnote{In fact, that estimate implies \eqref{brapar0} with $\delta=1$,
but a close inspection of its proof at scale $1$ reveals that
minor arrangements allow for the interpolating parameter $\delta$ to appear.
} with $F\equiv1$.
Then, choosing $\delta=\delta_R$ so that $\delta\, {\rm Tail}(w,0,\tfrac{R}{4})\le1$, we obtain from \eqref{brapar0}
that
\begin{equation*}
\label{brapar}
\|u\|_{L^\infty(B_{R/4})} \le C_5(N,s,p) \left[ \left(\intmed_{B_{R/2}}w^q\,dx\right)^\frac{1}{q}+
2R^\frac{sp}{p-1}
\right]
\end{equation*}
where we also used Jensen inequality and the fact that $q\ge p$. The latter implies that
\begin{equation}
\label{braparappl}
\int_{B_{R/2}} w^q\,dx\ge \omega_NR^N \left( \frac{w(0)}{C_5} - 2R^\frac{sp}{p-1}\right)^q\,.
\end{equation}
Recalling that $\zeta\equiv1$ on $B_{R/2}$, with the choice $R = (w(0)/C_5)^\frac{p-1}{sp}$ 
inequality \eqref{braparappl} yields
\begin{equation}
\label{denominator}
\int_\Omega w^p\zeta^p\,dx \ge C_6(N,s,p,q) w(0)^{q+\frac{p-1}{sp}N}\,.
\end{equation}
Finally, combining \eqref{denominator} with \eqref{numeratore} we conclude by \eqref{lambdae} that
\(
	\lambda_{p,q}^s(\Omega)\le C_7(N,s,p,q)w(0)^{1-q}
\).
Since by assumption $w(0)=\|w\|_\infty $, we conclude.

To end the proof, we assume that $w:=w_{s,p,\Omega}$ belongs to $L^\infty(\Omega)$. Then
condition $\lambda_{p,p}^s(\Omega)>0$ plainly follows by the torsional Hardy inequality \eqref{coroineq}.
Indeed, we have
\[
	\int_\Omega |u|^p\,dx \le \|w\|_{L^\infty(\Omega)}^{p-1}\int_\Omega \frac{|u|^p}{w^{p-1}}\,dx
	\le \|w\|_{L^\infty(\Omega)}^{p-1}  \iint_{\R^{2N}} \frac{\big| u(x)-u(y)\big|^p}{|x-y|^{N+sp}}\,dx\,dy\,,
\]
for all $u\in C^\infty_0(\Omega)$, and in view of \eqref{lambda} with $q=p$ this gives the desired conclusion.
To deduce \eqref{superhom},
we observe that $\lambda_{p,p}^s(\Omega)>0$ implies $\lambda_{p,q}^s(\Omega)>0$ for $p<q<p^\ast_s$ as well,
by the Gagliardo-Nirenberg inequalities of Lemma~\ref{lm:GN},
and this concludes the proof. 
\qed

\subsection{Proof of Theorem~\ref{main2}}
 The proof is analogous to the one presented in \cite{BR} in the case $s=1$. By Theorem~\ref{main} (see in particular \eqref{subhom}) it suffices to show that
\[
	\lambda_{p,q}^s(\Omega)>0\Longleftrightarrow \text{$\mathcal{D}_0^{s,p}(\Omega)\hookrightarrow
		L^q(\Omega)$ is compact. }
\]
We prove the implication ``$\Longrightarrow$'', the other one being obvious by \eqref{lambda}.

We assume $\lambda_{p,q}^s(\Omega)>0$, and we abbreviate $w_{s,p,\Omega}$ to $w$. By
Theorem~\ref{main} (case $q<p$), we have
\begin{equation}\label{wgamma}
w\in L^{\frac{p-1}{p-q}q}(\Omega)\,.
\end{equation}
In addition, in view of Remark~\ref{cauchypoincare},
by \eqref{coroineq}, \eqref{poincare}, and the density of $C^\infty_0(\Omega)$ in $\mathcal{D}_0^{s,p}(\Omega)$ the assumption also implies  that
\begin{equation}
\label{coroineq2}
	\int_\Omega\frac{|u|^p}{w^{p-1}}\,dx
	\le \iint_{\R^{2N}} \frac{| u(x)-u(y)|^p}{|x-y|^{N+sp}}\,dx\,dy\,,
	\qquad
	\text{for all $u\in \mathcal D^{s,p}_0(\Omega)$,}
\end{equation}
and
\begin{equation}
\label{poincare2}
	\lambda_{p,q}^s(\Omega) \left(
		\int_\Omega|u|^q\,dx
	\right)^\frac{p}{q}
	\le
	\iint_{\R^{2N}} \frac{|u(x)-u(y)|^p}{|x-y|^{N+sp}}\,dx\,dy\,,\qquad
	\text{for all $u\in\mathcal D^{s,p}_0(\Omega)$.}
\end{equation}

Let $(u_n)_n$ be a bounded sequence in $\mathcal{D}_0^{s,p}(\Omega)$. The Gagliardo-Nirenberg
inequalities of Lemma~\ref{lm:GN} entail that the sequence is bounded in $L^p(\Omega)$, too.
Hence, possibly passing to a subsequence, we may assume that
$(u_n)_n$ converges weakly to a function $u$ in $\mathcal{D}_0^{s,p}(\Omega)$
and in $L^p(\Omega)$, since $p>1$ and both spaces are reflexive. Moreover, by \eqref{poincare2} the function
$u$ belongs to $L^q(\Omega)$. 

We prove that the sequence $v_n = u_n-u\in \mathcal{D}_0^{s,p}(\Omega)\cap L^p(\Omega)$ 
converges to $0$ strongly in $L^q(\Omega)$.
By Rellich-Kondra\v{s}ov theorem, this happens strongly in $L^q(\Omega\cap B_R)$, for all $R>0$. Hence, for every $R>0$ and 
for every $ \varepsilon>0$ there exists $n_{R, \varepsilon}\in \mathbb N$ with
\begin{equation}\label{dentro}
\int_{\Omega\cap B_R} |v_n|^q\,dx\le \varepsilon
\end{equation}
for all indices $n\ge n_{R, \varepsilon}$. If in addition, for every $\varepsilon$ there exists $R_\varepsilon>0$ such that 
\begin{equation}
\label{manca}
	\int_{\R^N\setminus B_{R_\varepsilon}} |v_n|^q\,dx\le C \varepsilon\,, \qquad \text{for all $n\in\mathbb N$,}
\end{equation}
for suitable a constant $C>0$ independent of $\varepsilon$ and $n$, 
then the sequence $(v_n)_n$  converges to $0$ strongly in $L^q(\Omega)$, as desired.

To prove \eqref{manca} we observe that, for every $R>1$,
by H\"older inequality we have
\begin{equation*}
	\int_{\Omega\setminus B_R} |v_n|^q\,dx \le \left( \int_\Omega \frac{|v_n|^p}{w^{p-1}}\,dx\right)^\frac{q}{p}
	\left( \int_{\Omega\setminus B_{R}} w^{\frac{p-1}{p-q}q}\,dx\right)^\frac{p-q}{q}\,.
\end{equation*}
Since the sequence $(v_n)_n$  is bounded in $\mathcal{D}_0^{s,p}(\Omega)$, by \eqref{coroineq2} the first
factor in the right hand member is bounded by a constant independent of $n$. As for the second one,
by \eqref{wgamma} the absolute continuity of the integral implies that for every $\varepsilon>0$ there exists
$R_\varepsilon>1$ with
\[
	\left( \int_{\Omega\setminus B_{R_\varepsilon-1}} w^{\frac{p-1}{p-q}q}\,dx\right)^\frac{p-q}{q}\le \varepsilon\,.
\]
The last two estimates entail \eqref{manca}, which concludes the proof.
\qed

\begin{ack}
The author wishes to thank Prof. Lorenzo Brasco for his useful comments on a preliminary version of the present manuscript, as well as  for suggesting the problem, in Osaka in May 2017, during the 
 Workshop ``Geometric Properties for Parabolic and Elliptic PDEs'', the organisers of which are also 
 gratefully acknowledged.
 This research is supported by the INdAM FOE 2014 grant ``SIES''.
\end{ack}


\begin{thebibliography}{99}
\bibitem{VDBB} M. van der Berg, D. Bucur. On the torsion function with Robin or Dirichlet boundary conditions. Journ. of Funct. Anal. {\bf 266 } (2014) 1647--1666.
\bibitem{BC}
L. Brasco, E. Cinti. On fractional Hardy inequalities on convex sets, preprint available at \url{http://cvgmt.sns.it/paper/3560/} (2017).
\bibitem{BF}
L. Brasco, G. Franzina. Convexity properties of Dirichlet integrals and Picone-type inequalities, Kodai Math. J., {\bf 37  }(2014), 769--799. 
\bibitem{BLP} L. Brasco, E. Lindgren, E. Parini. The fractional Cheeger problem, Interfaces Free Bound., {\bf 16} (2014), 419--458.
\bibitem{BMS}
L. Brasco, S. Mosconi, M. Squassina.
Optimal decay of extremals for the fractional Sobolev inequality,
Calc. Var. Partial Differential Equations 
{\bf 55} (2016), no. 2, Art. 23, 1-32.
\bibitem{BP}
L Brasco, E Parini.
The second eigenvalue of the fractional $p$-Laplacian,
Adv. in Calc. Var. {\bf 9} (4), 323-355.
\bibitem{BR} L. Brasco, B. Ruffini.
Compact Sobolev embeddings and torsion functions. Ann. Inst. H. Poincar\'e Anal. Non Lin\'eaire {\bf 34} (2017), no. 4, 817--843.
\bibitem{BB} D. Bucur, G. Buttazzo. On the characterization of the compact embedding of Sobolev spaces. Calc. Var.
Partial Differential Equations, {\bf 44 } (2012) 455-475.
\bibitem{DKP}
A Di Castro, T Kuusi, G Palatucci.
Local behavior of fractional $p$-minimizers,
Annales de l'Institut Henri Poincar\'e (C) Non Linear Analysis {\bf 33} (5), 1279-1299.
\bibitem{DL}
J. Deny, J.L. Lions. Les espaces du type de Beppo Levi. Ann. Inst. Fourier {\bf 5} (1954) 305--370.
\bibitem{HL}
L. H\"ormander, J. L. Lions. Sur la compl\'etion par rapport \`a une int\'egrale de Dirichlet.
{\em Math. Scand.} {\bf 4} (1956), 259--270.
\bibitem{IMS}
A. Iannizzotto, S. Mosconi, M. Squassina.
Global H\"older regularity for the fractional p-Laplacian.
Rev. Mat. Iberoam. {\bf 32} (2016), no. 4, 1353--1392.
\bibitem{LL}
E. Lindgren, P. Lindqvist. Fractional eigenvalues. Calc. Var. Partial Differential Equations {\bf 49} (2014), 795--826.
\bibitem{Mb}
V. Maz'ya, Sobolev spaces, Sobolev spaces with applications to elliptic partial differential equations. Grundlehren der Mathematischen Wissenschaften, 342. Springer, Heidelberg, 2011.
\bibitem{MS}
V. Maz'ya, T. Shaposhnikova. On the Bourgain, Brezis, and Mironescu theorem concerning limiting embeddings of fractional Sobolev spaces, J. Funct. Anal., {\bf 195} (2002), 230--238.
\end{thebibliography}
\end{document}